\documentclass[12pt]{article}
\usepackage[utf8]{inputenc}

\usepackage{amsmath,amssymb,amsthm,mathtools,amsfonts}
\usepackage{stmaryrd}
\usepackage{mathrsfs}

\usepackage{graphicx}
\usepackage{xcolor}
\usepackage{multicol}
\usepackage{comment}
\usepackage[toc]{appendix}
\usepackage{lipsum}
\usepackage{indentfirst}

\usepackage[top=2.4cm,bottom=2.2cm,left=2.6cm,right=2cm]{geometry}

	\usepackage[shortlabels]{enumitem}
	
	\usepackage{url}
	\usepackage{hyperref}
	
	\allowdisplaybreaks
	\setlist[enumerate,1]{label={\normalfont(\roman*)}}
	
	\def\0{{\bf 0}}
	\def\1{{\bf 1}}

	\newcommand{\cH}{{\mathcal {H}}}

	\theoremstyle{plain}
	\newtheorem{thm}{Theorem}
	\newtheorem{lem}[thm]{Lemma}
	\newtheorem{rem}{Remark}
	\newtheorem{prp}[thm]{Proposition}

	\theoremstyle{definition}

	\newtheorem{definition}[thm]{Definition}

\begin{document}
		
		\title{Wang-Qiu-Hu switching and isomorphism}
		\author{
Aida Abiad\footnotemark[1]
\and
Hong-Jun Ge\footnotemark[2]
}

\date{}
\maketitle
\begingroup
\renewcommand{\thefootnote}{\fnsymbol{footnote}}
\footnotetext[1]{ Department of Mathematics and Computer Science, Eindhoven University of Technology, The Netherlands, Department of Mathematics and Data Science, Vrije Universiteit Brussel, Belgium.
Email: \href{mailto:a.abiad.monge@tue.nl}{\texttt{a.abiad.monge@tue.nl}}.}

\footnotetext[2]{School of Mathematical Sciences, University of Science and Technology of China, Hefei, China.
Email: \href{mailto:gehj22@mail.ustc.edu.cn}{\texttt{gehj22@mail.ustc.edu.cn}}.}
		\endgroup

\setcounter{footnote}{0}
		
\begin{abstract}
Cospectral graphs (graphs that share the same eigenvalues) expose the limitations of using the graph spectrum to uniquely identify graphs, and they also help to understand what structural properties a graph spectrum cannot capture. Switching methods, which are standard tools for constructing cospectral graphs,  require specific structural and algebraic conditions to hold for the operation to preserve the graph's spectrum. However, there is no guarantee that the obtained cospectral switched graph is non-isomorphic. In this paper we study this isomorphism problem for a recent and prolific switching method to produce cospectral graphs with respect to the adjacency spectra: Wang-Qiu-Hu  (WQH) switching. We do so by using common-neighbour multisets associated with a WQH partition, which allows us to derive an external common-neighbour criterion for certifying non-isomorphism after WQH-switching. Then, we apply the new criterion to clique extensions and to weak tensor products, with coclique extensions as a special case. As an application we obtain infinite families of cospectral non-isomorphic graphs, including some known constructions. Finally we extend the conditions of WQH-switching to generalized adjacency matrices and, under an additional degree condition, to Laplacian and signless Laplacian matrices.\\

\noindent\textbf{Keywords:} Wang-Qiu-Hu switching;  Spectral  characterization;  Cospectral  graphs; Graph isomorphism; Graph products
\end{abstract}

		\section{Introduction}

	Spectral graph theory studies the extent to which structural properties of a
graph are encoded in the spectra of associated matrices.  Cospectral
non-isomorphic graphs mark the limitations of such spectral characterizations.
Switching methods, including Godsil--McKay switching \cite{GM82},
Wang--Qiu--Hu switching \cite{WQH2019,QIU2020265}, and
Abiad--Haemers switching \cite{AHswitching} (see also \cite{abiad2024,MAO2023}), are standard
tools for constructing cospectral graphs. A switching method requires a prescribed switching set or switching partition.
The existence of such a partition guarantees cospectrality after the prescribed
edge changes, but it does not by itself guarantee a new graph: the switched
graph may still be isomorphic to the original one.  A central challenge is
therefore to give verifiable conditions under which switching yields a
non-isomorphic cospectral mate.  For Godsil--McKay switching, this question was investigated in \cite{abh2015}.

WQH-switching has been used to show that certain graph classes are not
determined by their spectrum \cite{MRC,bch2015,h1996,h2020}, to construct
strongly regular graphs \cite{ABIAD20231,IM19,IHRINGER2021112560,QJW20}, and
more recently in the construction of Neumaier graphs
\cite{EVANS2023113384}.  In all of these applications, proving non-isomorphism after switching is
often the most challenging part of the proof argument.

This paper develops non-isomorphism criteria for WQH-switching.  We introduce
common-neighbour multisets associated with a WQH partition and prove an
external common-neighbour test: a change in this external data certifies that
the switched graph is non-isomorphic to the original one.  We apply this test
to two graph operations.  First, we prove that, under explicit hypotheses on the
base WQH partition, every \(t\)-clique extension with \(t\ge2\) admits a
WQH-switching that produces a cospectral non-isomorphic graph; see
Theorem~\ref{thm:clique-wqh}.  Second, we prove an analogous result for
WQH-switching on one fibre of a weak tensor product \(\mathcal H\times G\),
with coclique extensions included as a special case; see
Theorem~\ref{thm:WQH-weak-tensor}.  These results are then specialized in
Section~\ref{sec:applications} to weak tensor products, to \(t\)-coclique
extensions, and to clique-extension constructions whose twists are realized by
WQH-switchings.  We also record extensions to generalized adjacency matrices
and, under an additional degree condition, to Laplacian and signless Laplacian
matrices.

	The paper is organized as follows.  Section~\ref{sec:preliminaries} recalls
WQH-switching, introduces the common-neighbour notation, and proves the
external common-neighbour test used throughout the paper.  Section~\ref{sec:cliqueextension}
covers clique extensions, while Section~\ref{sec:weak-tensor} investigates weak tensor
products. Section~\ref{sec:other-matrices} extends the WQH-switching argument to generalized adjacency matrices and, under an additional degree condition, to
Laplacian and signless Laplacian matrices.   Finally, Section~\ref{sec:applications} presents applications of the new isomorphism criteria to weak tensor products, coclique extensions, and
clique-extension constructions.

\section{Preliminaries}\label{sec:preliminaries}

Let $[t]:=\{1,2,\ldots,t\}$.  All graphs are finite.  Unless explicitly stated
otherwise, graphs are simple.  The symbol $\sqcup$ denotes disjoint union, and
$\otimes$ denotes the Kronecker product of matrices.

For a graph $X$ and a vertex $x\in V(X)$, let $N_X(x)$ denote the neighbourhood
of $x$.  If $U\subseteq V(X)$, put
\[
d_U(x):=|N_X(x)\cap U|.
\]
In particular, $d_X(x):=d_{V(X)}(x)$.  For vertices $u,v\in V(X)$, write
\[
\lambda_X(u,v):=|N_X(u)\cap N_X(v)|.
\]
Thus, for simple graphs, $\lambda_X(u,u)=d_X(u)$.  All multisets of the form
$\{\lambda_X(u,v):u\in U,\ v\in W\}$ are counted over ordered pairs
$(u,v)\in U\times W$.  For $U,W\subseteq V(X)$ and an integer $r$, define
\[
\Lambda_X(U,W):=\{\lambda_X(u,v):u\in U,\ v\in W\},
\]
\[
\Lambda_{r,X}(U,W):=
\{\lambda_X(u,v):u\in U,\ d_X(u)=r,\ v\in W\}.
\]
We also use
\[
\overline\Lambda_X(U):=\Lambda_X(U,V(X)\setminus U).
\]

\begin{definition}
Let $G$ be a graph.  The \emph{$t$-clique extension} of $G$ is the graph with
vertex set $V(G)\times[t]$, where $(u,i)$ and $(v,j)$ are adjacent if and only if
either $u=v$ and $i\ne j$, or $uv\in E(G)$.
\end{definition}

\subsection{WQH-switching}

In 2019, Wang, Qiu, and Hu~\cite{WQH2019} introduced a new switching, which generalizes the well-known GM-switching proposed by Godsil and McKay~\cite{GM82}.

\begin{definition}\label{def:wqh-type}
Let $G$ be a graph with vertex set
\[
V(G)=C_1\sqcup C_2\sqcup D_1\sqcup D_2\sqcup D_3.
\]
We say that $G$ is of \emph{WQH-type} with respect to this partition if the
following conditions hold:
\begin{enumerate}[\textup{(P\arabic*)}]
\item $|C_1|=|C_2|$;
\item $d_{C_1}(u)-d_{C_2}(u)=d_{C_2}(v)-d_{C_1}(v)$ for every $u\in C_1$ and
$v\in C_2$;
\item for every $x\in C_1$,
$N_G(x)\cap(D_1\cup D_2)=D_1$;
\item for every $y\in C_2$,
$N_G(y)\cap(D_1\cup D_2)=D_2$;
\item for every $w\in D_3$, one has $d_{C_1}(w)=d_{C_2}(w)$.
\end{enumerate}

The \emph{WQH-switched graph} $G'$ with respect to $(G,C_1,C_2)$ is obtained
from $G$ by replacing \textup{(P3)} and \textup{(P4)} by
\[
N_{G'}(x)\cap(D_1\cup D_2)=D_2\qquad (x\in C_1),
\]
\[
N_{G'}(y)\cap(D_1\cup D_2)=D_1\qquad (y\in C_2),
\]
and leaving all other adjacencies unchanged.
\end{definition}

Let \(q:=|C_1|=|C_2|\).  When \(q\in\{1,2\}\), WQH-switching is equivalent
to GM-switching, see \cite{ih2022}. 
For some graph products, Abiad, Brouwer, and Haemers \cite{abh2015} provided sufficient conditions for being non-isomorphic after GM-switching.
So, in our work, we only focus on $|C_1|=|C_2|\geq 3$.
 For arbitrary \(q\), the corresponding WQH-switching matrix
with respect to the partition \((C_1,C_2,D_1\cup D_2\cup D_3)\) is
\begin{equation}\label{eq:wqh-matrix}
Q=
\begin{bmatrix}
	I_q-\frac1qJ_q & \frac1qJ_q & 0\\
	\frac1qJ_q & I_q-\frac1qJ_q & 0\\
	0&0&I
\end{bmatrix},
\end{equation}
where \(I_q\) is the \(q\times q\) identity matrix and \(J_q\) is the
\(q\times q\) all-one matrix.  Then $Q^\top=Q$ and $Q^2=I$.  If $A$ is the
adjacency matrix of $G$, then the adjacency matrix of $G'$ is $A':=QAQ.$

\begin{thm}[WQH-switching, {\cite[Theorem~3.5]{WQH2019}}]\label{thm:wqh-cospectral}
Let $G$ be a WQH-type graph with WQH-switched graph $G'$.  Then $G$ and $G'$
are cospectral.
\end{thm}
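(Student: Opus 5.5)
Since $Q$ is symmetric with $Q^2=I$, it is orthogonal and $Q^{-1}=Q$, so $QAQ=Q^{-1}AQ$ is similar to $A$ and has the same spectrum. The substance of Theorem~\ref{thm:wqh-cospectral} is therefore the claim stated just before it: $QAQ$ equals the adjacency matrix $A'$ of the switched graph $G'$. The plan is to compute $QAQ$ block by block and check it against $A'$, which is a $0/1$ symmetric matrix.

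\textbf{Setup.} Order the vertices as $C_1,C_2,D$ with $D=D_1\cup D_2\cup D_3$, and write
\[
A=\begin{bmatrix} A_{11}&A_{12}&B_1\\ A_{12}^\top&A_{22}&B_2\\ B_1^\top&B_2^\top&A_D\end{bmatrix}.
\]
Write $Q_0$ for the upper-left $2q\times 2q$ block of $Q$. Then $Q_0$ is $I-\frac{1}{q}ww^\top$ with $w=(\1,-\1)$, which is the reflection in $w^\perp$. The $D$--$D$ block of $QAQ$ is $A_D$, unchanged.

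\textbf{The $C$--$D$ block.} This block equals $Q_0\begin{bmatrix}B_1\\B_2\end{bmatrix}$. For each $z\in D$, consider its column $(b_1,b_2)$. It is sent to $(b_1,b_2)-\frac{1}{q}\bigl(\1^\top b_1-\1^\top b_2\bigr)(\1,-\1)$. The cases are as follows.
\begin{itemize}
\item If $z\in D_3$, (P5) gives $\1^\top b_1=\1^\top b_2$, so the column is unchanged.
\item If $z\in D_1$, then $(b_1,b_2)=(\1,\0)$ by (P3) and (P4). The column becomes $(\0,\1)$.
\item If $z\in D_2$, the column $(\0,\1)$ symmetrically becomes $(\1,\0)$.
\end{itemize}
This is exactly the switching rule for $G'$.

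\textbf{The $C$--$C$ block (main obstacle).} This block is $Q_0MQ_0$, where $M=\begin{bmatrix}A_{11}&A_{12}\\A_{12}^\top&A_{22}\end{bmatrix}$, and it must equal $M$ itself. It suffices to show that $w$ is an eigenvector of $M$. Indeed, if $M$ is symmetric and $Mw=\mu w$, then $M$ commutes with $ww^\top$, hence with $Q_0$, and so $Q_0MQ_0=MQ_0^2=M$.

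\textbf{Verifying the eigenvector.} The entry of $Mw$ at $u\in C_1$ is $d_{C_1}(u)-d_{C_2}(u)$. The entry at $v\in C_2$ is $d_{C_1}(v)-d_{C_2}(v)$. By (P2), all quantities $d_{C_1}(u)-d_{C_2}(u)$ for $u\in C_1$ equal a common value $\mu$, and so do all $d_{C_2}(v)-d_{C_1}(v)$ for $v\in C_2$. Hence $Mw=\mu(\1,-\1)=\mu w$.

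Combining the three blocks, $QAQ=A'$. Since $A'$ is similar to $A$, the graphs $G$ and $G'$ are cospectral.
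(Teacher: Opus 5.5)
Your proposal is correct and follows essentially the same route as the paper. The paper does not reprove the result: it records that $Q^\top=Q$, $Q^2=I$ and $A'=QAQ$ (deferring to \cite{WQH2019}), so that $A'$ is orthogonally similar to $A$. Your block computation supplies exactly the verification of $A'=QAQ$ that the paper takes for granted: (P3)--(P5) handle the $C$--$D$ columns, and (P2) makes $w=(\mathbf 1,-\mathbf 1)$ an eigenvector of the $C$--$C$ block, so that this block commutes with the reflection $Q_0$.
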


A more general version of WQH-switching was introduced in \cite{QJW20}.
Recently, several alternative switching methods to construct cospectral graphs have been proposed in the literature, see e.g. \cite{abiad2024,MAO2023}.

The next lemma records some common-neighbour conditions that must hold if a
WQH-type graph is isomorphic to its switched graph.  
We state these conditions
in a form that will be used in Section~\ref{sec:cliqueextension} and \ref{sec:weak-tensor} to detect non-isomorphism.

\begin{lem}\label{lem:external-test}
Let $G$ be a WQH-type graph with vertex partition
$C_1\sqcup C_2\sqcup D_1\sqcup D_2\sqcup D_3$.  Put
$S:=C_1\cup C_2$ and $Y:=V(G)\setminus S.$
Let $G'$ be the WQH-switched graph with respect to $(G,C_1,C_2)$.  Assume that
$|D_1|=|D_2|$.  Then the following statements hold.
\begin{enumerate}[\textup{(\alph*)}]
\item For every integer $r$,
$
\Lambda_{r,G}(S,S)=\Lambda_{r,G'}(S,S)
$ and $
\Lambda_{r,G}(Y,Y)=\Lambda_{r,G'}(Y,Y).$
\item If there exists an integer $r$ such that
\[
\Lambda_{r,G}(S,Y)\sqcup\Lambda_{r,G}(Y,S)
\ne
\Lambda_{r,G'}(S,Y)\sqcup\Lambda_{r,G'}(Y,S),
\]
then $G$ and $G'$ are non-isomorphic.
\item If
$\overline\Lambda_G(S)\ne\overline\Lambda_{G'}(S)$,
then $G$ and $G'$ are non-isomorphic.
\end{enumerate}
\end{lem}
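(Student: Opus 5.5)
The plan is to show that $G$ and $G'$ have the same degrees and agree pointwise on $\lambda$ over $S\times S$ and over $Y\times Y$. Then (b) and (c) follow by cancelling multisets from the global invariant $\Lambda_{r,X}(V,V)$.

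\emph{Step 1 (degrees are preserved).} For $x\in C_1$, the switching replaces the neighbours $D_1$ by $D_2$, so $d_{G'}(x)=d_G(x)-|D_1|+|D_2|=d_G(x)$ because $|D_1|=|D_2|$. The same holds for $C_2$. A vertex $w\in D_1$ loses its neighbours in $C_1$, which is all of $C_1$, and gains all of $C_2$. Since $|C_1|=|C_2|$ by (P1), its degree is unchanged, and likewise for $D_2$. Vertices of $D_3$ are untouched. Hence $d_G=d_{G'}$ on all of $V(G)$.

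\emph{Step 2 (part (a), pointwise).} I would use $A'=QAQ$ with $Q=\mathrm{diag}(Q_S,I)$ block diagonal with respect to $(S,Y)$. Then $(A'^2)_{YY}=(QA^2Q)_{YY}=(A^2)_{YY}$, so $\lambda_{G'}(u,v)=\lambda_G(u,v)$ for all $u,v\in Y$.

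For $u,v\in S$ I would argue combinatorially instead. Common neighbours lying in $S\cup D_3$ are unchanged, since no edges at these vertices change except those into $D_1\cup D_2$. Common neighbours in $D_1\cup D_2$ number $|D_1|$ if $u,v\in C_1$, $|D_2|$ if $u,v\in C_2$, and $|D_1\cap D_2|=0$ otherwise. After switching these become $|D_2|$, $|D_1|$ and $0$ respectively, which are the same values because $|D_1|=|D_2|$. So $\lambda$ is pointwise preserved on $S\times S$ as well. Combined with Step 1, the degree-$r$ restrictions also agree, which gives (a).

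\emph{Step 3 (parts (b) and (c)).} The multiset $\Lambda_{r,X}(V(X),V(X))$ is an isomorphism invariant, so an isomorphism $G\cong G'$ forces $\Lambda_{r,G}(V,V)=\Lambda_{r,G'}(V,V)$ for every $r$. Split this multiset as
\[
\Lambda_{r,X}(S,S)\sqcup\Lambda_{r,X}(S,Y)\sqcup\Lambda_{r,X}(Y,S)\sqcup\Lambda_{r,X}(Y,Y).
\]
By (a), the first and last pieces coincide for $X=G$ and $X=G'$, and cancelling them contradicts the hypothesis of (b).

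For (c), symmetry of $\lambda$ gives $\Lambda_X(Y,S)=\Lambda_X(S,Y)=\overline\Lambda_X(S)$ as multisets. Taking the union over all $r$ of $\Lambda_{r,X}(S,Y)\sqcup\Lambda_{r,X}(Y,S)$ therefore yields two copies of $\overline\Lambda_X(S)$. If $\overline\Lambda_G(S)\ne\overline\Lambda_{G'}(S)$, these unions differ, so some $r$ satisfies the hypothesis of (b), and (b) gives non-isomorphism.

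The only delicate point is Step 2 on $S\times S$. There the matrix identity gives only a conjugation by $Q_S$, so I avoid it and use the direct common-neighbour count, which relies essentially on $|D_1|=|D_2|$.
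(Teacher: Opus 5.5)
Your proposal is correct and follows the paper's proof in all essentials. You show that degrees are preserved, that $\lambda$ is preserved pointwise on $S\times S$ (via the same $D_1\cup D_2$ count using $|D_1|=|D_2|$) and on $Y\times Y$, and then you cancel the internal parts from a global isomorphism invariant. The differences are minor: for $Y\times Y$ you use $(QA^2Q)_{YY}=(A^2)_{YY}$ from the block-diagonal form of $Q$, where the paper does a case analysis via (P1) and (P5), and you derive (c) from (b) by taking the union over $r$, where the paper applies the unrefined invariant $\{\lambda_X(u,v):u,v\in V(X)\}$ directly.
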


\begin{proof}
Since $|D_1|=|D_2|$, WQH-switching preserves the degree of every vertex in
$S$.  It also preserves the degree of every vertex in $Y$, because vertices in
$D_1$ and $D_2$ interchange the equally large sets $C_1$ and $C_2$, while
vertices in $D_3$ are not switched.

We next compare common-neighbour numbers inside $S$ and inside $Y$.  For
$x,y\in S$, the contribution from $S\cup D_3$ is unchanged.  The contribution
from $D_1\cup D_2$ is $|D_1|$ for two vertices in $C_1$, $|D_2|$ for two
vertices in $C_2$, and $0$ for one vertex in each of $C_1$ and $C_2$; after
switching these first two values are interchanged.  Since $|D_1|=|D_2|$, we have
\[
\lambda_G(x,y)=\lambda_{G'}(x,y)\qquad (x,y\in S).
\]
For $x,y\in Y$, the contribution from $Y$ is unchanged.  The contribution from
$S$ is also unchanged: for one vertex in $D_1$ and one in $D_2$ it is $0$ before
and after switching; for two vertices in $D_1$ it changes from $|C_1|$ to
$|C_2|$, and the case of two vertices in $D_2$ is symmetric; if one vertex lies
in $D_3$, equality follows from $d_{C_1}(z)=d_{C_2}(z)$ for $z\in D_3$; and if
both vertices lie in $D_3$, the $S$-neighbourhoods are unchanged.  Hence
\[
\lambda_G(x,y)=\lambda_{G'}(x,y)\qquad (x,y\in Y).
\]
Together with preservation of degrees, this proves (a).

For (b), consider the graph invariant
\[
\Lambda_{r,X}:=\{\lambda_X(u,v):u,v\in V(X),\ d_X(u)=r\}.
\]
It decomposes as the disjoint union of the four ordered parts
$\Lambda_{r,X}(S,S)$, $\Lambda_{r,X}(S,Y)$, $\Lambda_{r,X}(Y,S)$, and
$\Lambda_{r,X}(Y,Y)$.  By (a), the two internal parts are unchanged under
switching.  Therefore a change in the two external parts forces
$\Lambda_{r,G}\ne\Lambda_{r,G'}$, and hence $G\not\cong G'$.

For (c), use the invariant
$\Lambda_X:=\{\lambda_X(u,v):u,v\in V(X)\}$.  The internal parts on $S\times S$ and
$Y\times Y$ are unchanged by the preceding argument.  Since
$\lambda_X(u,v)=\lambda_X(v,u)$, a change in $\overline\Lambda_X(S)=\Lambda_X(S,Y)$
forces a change in the full ordered multiset $\Lambda_X$.  Hence $G$ and $G'$ are
non-isomorphic.
\end{proof}

\begin{rem}
The equalities forced by Lemma~\ref{lem:external-test} are necessary conditions
for an isomorphism $G\cong G'$, but they should not be expected to be sufficient.
They only record common-neighbour data relative to the prescribed switching set
$S=C_1\cup C_2$, and such data is generally too coarse to force an isomorphism.
Moreover, even when $G$ and $G'$ are isomorphic, an isomorphism need not preserve
$S$ setwise.  Indeed, Abiad, Van de Berg and Simoens constructed examples in
which WQH-switching produces an isomorphic graph but no isomorphism fixes the
switching set setwise; see \cite[Appendix]{ABScounting}.  Thus, in what follows,
we use Lemma~\ref{lem:external-test} only in the direction of detecting changes in
these invariants, which certifies non-isomorphism.
\end{rem}

Note also that the conditions from the previous lemma  also hold in the more general framework of design switching introduced in \cite{ihringer2025}. In particular, one can state a general invariant-based condition, however this is a broad sufficient condition rather than a sharp design-switching-specific theorem.

\begin{rem}
The same philosophy applies to any switching obtained by conjugating the
adjacency matrix with a regular orthogonal matrix supported on a switching set.
Let $X$ be a graph with vertex set $C\sqcup Y$, and let $X'$ be obtained from
$X$ by such a switching on $C$.  If, for some integer $r$, the internal
degree-refined common-neighbour multisets on $C\times C$ and $Y\times Y$ are
unchanged, but the external part changes, that is,
\[
\Lambda_{r,X}(C,C)=\Lambda_{r,X'}(C,C),\qquad
\Lambda_{r,X}(Y,Y)=\Lambda_{r,X'}(Y,Y),
\]
while
\[
\Lambda_{r,X}(C,Y)\sqcup\Lambda_{r,X}(Y,C)
\ne
\Lambda_{r,X'}(C,Y)\sqcup\Lambda_{r,X'}(Y,C),
\]
then $X$ and $X'$ are non-isomorphic.  Indeed, the full multiset
\[
\{\lambda_X(u,v):u,v\in V(X),\ d_X(u)=r\}
\]
is an isomorphism invariant.  Lemma~\ref{lem:external-test} is the WQH
specialization of this obstruction, where the required internal equalities
follow from $|D_1|=|D_2|$ and the WQH balance conditions.
\end{rem}

\section{Clique extensions of WQH-type graphs}\label{sec:cliqueextension}

Throughout this section, let $G$ be a WQH-type graph with vertex partition
$C_1\sqcup C_2\sqcup D_1\sqcup D_2\sqcup D_3$.  Put
$S:=C_1\cup C_2.$
For $x\in V(G)$ and $a\in[t]$, write $x^{(a)}:=(x,a)$.  For $x\in S$, define its
$D_3$-type by
\[
\tau(x):=N_G(x)\cap D_3.
\]
For $i\in\{1,2\}$, an integer $m$, and $A\subseteq D_3$, define
\[
f_{i,m}(A):=\#\{x\in C_i: d_G(x)=m,\ \tau(x)=A\}.
\]
\begin{thm}\label{thm:clique-wqh}
	Let $G$ be a WQH-type graph whose vertex set is partitioned as
	$C_1\sqcup C_2\sqcup D_1\sqcup D_2\sqcup D_3$.  Let $t\ge 2$, and let
	$H$ be the $t$-clique extension of $G$.  Put
	\[
	C_1^H:=C_1\times\{1\},\qquad C_2^H:=C_2\times\{1\},
	\]
	\[
	D_1^H:=D_1\times[t],\qquad D_2^H:=D_2\times[t],
	\]
	\[
	D_3^H:=\bigl(S\times\{2,\ldots,t\}\bigr)\sqcup(D_3\times[t]).
	\]
	Assume that:
	\begin{enumerate}[\textup{(C\arabic*)}]
		\item $|D_1|=|D_2|>0$, $|C_1|=|C_2|\ge 3$, and $S=C_1\cup C_2$ is a clique;
		\item there exists an integer $s$ such that
		$\bigl(f_{1,s}(A)\bigr)_{A\subseteq D_3}
		\ne
		\bigl(f_{2,s}(A)\bigr)_{A\subseteq D_3};$
		\item no vertex of $D_3$ is adjacent to every vertex of $S$.
	\end{enumerate}
	Then $H$ is of WQH-type with respect to the partition
	$C_1^H\sqcup C_2^H\sqcup D_1^H\sqcup D_2^H\sqcup D_3^H.$
	Let $H'$ be the WQH-switched graph of $H$.  Then $H$ and $H'$ are
	non-isomorphic.  In particular, they are cospectral non-isomorphic graphs.
\end{thm}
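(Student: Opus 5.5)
The proof splits into two parts. First I check that $H$ is of WQH-type with $|D_1^H|=|D_2^H|$, so that Lemma~\ref{lem:external-test} applies. Then I exhibit a degree $r$ for which the external multiset in Lemma~\ref{lem:external-test}(b) changes. Cospectrality itself then comes from Theorem~\ref{thm:wqh-cospectral}.

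\emph{WQH-type.} In $H$, $x^{(a)}$ is adjacent to $y^{(b)}$ if and only if $xy\in E(G)$, or $x=y$ and $a\neq b$. Hence
\[
d_H(x^{(a)})=t\,d_G(x)+t-1 .
\]
(P1) is immediate. For (P2), the $C_i^H$-neighbours of $u^{(1)}$ are exactly the layer-$1$ copies of $N_G(u)\cap C_i$, so the differences $d_{C_1^H}-d_{C_2^H}$ are those of $G$. For (P3) and (P4), $N_H(x^{(1)})\cap(D_1^H\cup D_2^H)=(N_G(x)\cap(D_1\cup D_2))\times[t]$, which equals $D_1^H$ or $D_2^H$ as required. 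For (P5) there are two kinds of vertices in $D_3^H$:
\begin{itemize}
\item for $z^{(b)}$ with $z\in D_3$, the neighbours in $C_i^H$ are $(N_G(z)\cap C_i)\times\{1\}$, so (P5) follows from (P5) in $G$;
\item for $x^{(a)}$ with $x\in S$ and $a\ge2$, $x^{(a)}$ is adjacent to all of $C_1^H\cup C_2^H$, since $S$ is a clique and $x^{(a)}\sim x^{(1)}$. As $|C_1|=|C_2|$, the two counts are equal.
\end{itemize}
This is where the clique hypothesis in (C1) is used. Since $|D_1^H|=t|D_1|=t|D_2|=|D_2^H|$, Lemma~\ref{lem:external-test} applies with $S^H=C_1^H\cup C_2^H$ and $Y^H$ its complement.

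\emph{Choice of $r$ and the key common-neighbour numbers.} Take $r:=ts+t-1$. Then a vertex of $S^H$ has degree $r$ exactly when it is $x^{(1)}$ with $d_G(x)=s$. The vertices of $Y^H$ of degree $r$ are the copies of degree-$s$ vertices of $G$ lying outside layer $1$ or outside $S$. The switching only changes $N(x^{(1)})\cap(D_1^H\cup D_2^H)$.

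The informative pairs are $(x^{(1)},y^{(a)})$ with $x,y\in S$ and $a\ge2$, because $y^{(a)}$ is not switched. I compute $\lambda_H(x^{(1)},y^{(a)})$ as a sum of contributions:
\begin{itemize}
\item from $D_3\times[t]$: $t\,|\tau(x)\cap\tau(y)|$;
\item from $S$-copies: a term depending only on the clique structure, hence unchanged;
\item from $(D_1\cup D_2)\times[t]$: $t|D_1|$ if $x,y$ lie in the same class $C_i$, and $0$ otherwise.
\end{itemize}
After switching, the last contribution flips: same class gives $0$ and different classes give $t|D_1|$, with $|D_1|>0$ by (C1). So the row of $x^{(1)}$, as $y$ ranges over $S$, changes from recording $\tau$-intersections with the own class shifted by $t|D_1|$ to recording them with the opposite class shifted. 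The $Y^H$-side multiset $\Lambda_{r}(Y^H,S^H)$ is handled symmetrically, restricting to $y$ with $d_G(y)=s$.

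\emph{Main obstacle.} The hard step is showing that the total multiset $\Lambda_{r,H}(S^H,Y^H)\sqcup\Lambda_{r,H}(Y^H,S^H)$ genuinely changes, rather than the flip being compensated by a permutation.

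I would first isolate the pairs with $x,y$ both of degree $s$, and encode the multiset through the type-count vectors $f_{i,s}(\cdot)$. Writing the multiset of values $t|A\cap B|+\varepsilon\,t|D_1|$ as a bilinear expression in $f_{1,s}$ and $f_{2,s}$, the difference before versus after switching should be a nonzero multiple of $(f_{1,s}-f_{2,s})$ paired with itself. For instance, I would look at the largest attained value $t|A|+t|D_1|+\text{const}$ with $A=B$, or use a generating function in the variables $A$, to extract $f_{1,s}\neq f_{2,s}$ from (C2).

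Separately, I must ensure that the pairs $(x^{(1)},z^{(b)})$ with $z\in D_3$ do not cancel this change. I would compare their possible values and use (C3) to show that they cannot reach the maximal value coming from the $S\times S$ pairs. The reason is that $z$ misses some vertex of $S$, so $z^{(b)}$ lacks the full $S$-copy contribution that each $y^{(a)}$ gets from the clique. If this comparison goes through, Lemma~\ref{lem:external-test}(b) gives $H\not\cong H'$.
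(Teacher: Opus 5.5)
Your framework matches the paper's. You use Lemma~\ref{lem:external-test}(b) with $r=ts+t-1$ and look at the top value $M=r-1$. For $x\in C_i$ with $d_G(x)=s$, this value is attained at $(x^{(1)},y^{(a)})$, $a\ge2$, exactly when $\tau(x)\subseteq\tau(y)$ and $y\in C_i$ (in $H$) or $y\in C_{3-i}$ (in $H'$). However, taking $s$ to be an arbitrary integer from (C2) is a genuine gap. The multiset $\Lambda_r(S^H,Y^H)$ restricts only the degree of the first vertex, so you cannot isolate pairs where both vertices have degree $s$. Moreover, $\tau(x)\subseteq\tau(y)$ also holds for $y$ of larger degree with strictly larger type, not only for $A=B$. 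Put $F_i(A):=\#\{z\in C_i:\tau(z)\supseteq A\}$. Then the multiplicity of $M$ in the forward part drops by $(t-1)\sum_A\bigl(f_{1,s}(A)-f_{2,s}(A)\bigr)\bigl(F_1(A)-F_2(A)\bigr)$, and the higher-degree part of $F_1(A)-F_2(A)$ can cancel the square.

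The missing idea is to take $s=\ell$ \emph{maximal} with $(f_{1,\ell}(A))_A\ne(f_{2,\ell}(A))_A$. Then every type of degree $>\ell$ occurs equally often in $C_1$ and $C_2$, so $F_1(A)-F_2(A)=f_{1,\ell}(A)-f_{2,\ell}(A)$, and the drop becomes $(t-1)\sum_A\bigl(f_{1,\ell}(A)-f_{2,\ell}(A)\bigr)^2>0$.

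The failure for non-maximal $s$ is real, not an artefact of looking only at $M$. Take $C_1=\{a_1,a_2,a_3\}$, $C_2=\{b_1,b_2,b_3\}$, $D_1=\{d_1\}$, $D_2=\{d_2\}$, $D_3=\{p,q,w\}$ and $S$ a clique. Set $\tau(a_1)=\{p\}$, $\tau(a_2)=\{q,w\}$, $\tau(b_1)=\{q\}$, $\tau(b_2)=\{p,w\}$, $\tau(a_3)=\tau(b_3)=\varnothing$, with no further edges. Then (C1)--(C3) hold, and (C2) holds with $s=7$. Yet for $t=2$ and $r=15$, each of the four degree-$15$ rows has the same value multiset in $H$ and in $H'$: $a_1^{(1)},b_1^{(1)}$ against $Y^H$, and $a_1^{(2)},b_1^{(2)}$ against $S^H$. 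For example, the row of $a_1^{(1)}$ on $S\times\{2\}$ is $\{14,12,12,12,10,10\}$ in both graphs. The maximal choice $s=8$, by contrast, does detect the switch.

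The reverse part $\Lambda_r(Y^H,S^H)$ is also not simply symmetric to the forward one. Its first vertex can be $z^{(b)}$ with $z\in D_1\cup D_2\cup D_3$ and $d_G(z)=s$, paired with $y^{(1)}$ for every $y\in S$. Such pairs can attain $M$ when $d_G(y)>s$. For instance, if $z\in D_1$ has all its neighbours in $C_1\cup D_1$, then for every $y\in C_1$
\[
\lambda_H(z^{(b)},y^{(1)})=2(t-1)+t\lambda_G(z,y)=M .
\]
Your (C3) argument only bounds pairs whose degree-$r$ vertex is a copy of a vertex of $S$; this is Lemma~\ref{lem:clique-D}, and for partners in $D_1\cup D_2$ it also needs $|C_1|\ge3$, which you did not mention. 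The paper handles the reverse $D$-pairs via Lemma~\ref{lem:clique-template}: $\lambda(z^{(b)},y^{(1)})$ depends only on the side and the type of $y$, and switching exchanges the values attached to the two sides. It then uses maximality of $\ell$ again: types of degree $>\ell$ are equidistributed between $C_1$ and $C_2$, so these contributions to the multiplicity of $M$ agree in $H$ and $H'$.
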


Throughout the
following lemmas, we work under the hypotheses of
Theorem~\ref{thm:clique-wqh}, and $H'$ denotes the graph obtained from $H$ by
interchanging the adjacencies between $C_1^H,C_2^H$ and $D_1^H,D_2^H$ according
to the displayed partition.

Let $\ell$ be an integer and set
\[
r:=t\ell+(t-1),
\qquad
M:=r-1=t\ell+t-2.
\]
If $d_G(x)=\ell$, then
\[
d_H(x^{(a)})=d_{H'}(x^{(a)})=r
\qquad (a\in[t]).
\]

\begin{lem}\label{lem:clique-D}
	Let $x\in C_i$, where $i\in\{1,2\}$, and let $y\in D_1\cup D_2\cup D_3$.  If
	$d_G(x)=\ell$, then, for $X\in\{H,H'\}$ and all $a,b\in[t]$,
	$\lambda_X(x^{(a)},y^{(b)})<M.$
\end{lem}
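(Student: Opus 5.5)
Since $\lambda_X(u,v)\le\min\{d_X(u),d_X(v)\}$ for any two vertices, and $x^{(a)}$ has degree exactly $r=M+1$ in both $H$ and $H'$, the plan is to find two neighbours of $x^{(a)}$ in $X$ that are not common neighbours of $x^{(a)}$ and $y^{(b)}$. That gives $\lambda_X(x^{(a)},y^{(b)})\le r-2=M-1<M$. Note that $y^{(b)}$ itself may be a neighbour of $x^{(a)}$ but is never a common neighbour, because the graphs are simple. So it is enough to produce one further neighbour of $x^{(a)}$, distinct from $y^{(b)}$, that is not adjacent to $y^{(b)}$.

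Case 1: $y^{(b)}$ is not adjacent to $x^{(a)}$. Then every common neighbour lies in $N_X(y^{(b)})\cap N_X(x^{(a)})$. The vertices $x^{(c)}$, $c\ne a$, are neighbours of $x^{(a)}$; there are $t-1\ge1$ of them. In both $H$ and $H'$, a vertex $x^{(c)}$ with $c\ne1$ has the same neighbourhood outside its fibre as $x$ has in $G$. The layer $x^{(1)}$ is switched, but its adjacency to $y^{(b)}$ is still governed by the switched relation. So I would argue as follows. If some $x^{(c)}$ with $c\ne a$ is non-adjacent to $y^{(b)}$, then excluding it together with the degree bound gives $\lambda\le r-1$. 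To get strictly below $M$ I need two neighbours of $x^{(a)}$ missing from $N(y^{(b)})$. This is where (C3) and the clique hypothesis (C1) enter.

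Case 2: $y\in D_3$. By (C3) there is $z\in S$ not adjacent to $y$. Since $S$ is a clique and the adjacencies between $S\times[t]$ and $D_3\times[t]$ are unchanged by the switch, $z^{(c)}$ is a neighbour of $x^{(a)}$ (for $z\ne x$) and is not adjacent to $y^{(b)}$. If $z=x$, then $x^{(c)}$ for $c\ne a$ plays this role. Together with $y^{(b)}$ itself (when adjacent) or a second fibre vertex, this gives two excluded neighbours.

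Case 3: $y\in D_1\cup D_2$. Here I would use that $x^{(a)}$ is adjacent to the whole clique $S\times\{a\}$ and to its own fibre. In both $H$ and $H'$, the vertex $y^{(b)}$ is adjacent to exactly one of $C_1^H$ and $C_2^H$ (or neither) in layer $1$. In the layers $c\ge2$, $y^{(b)}$ is adjacent exactly to the $G$-side, namely $C_1\times\{c\}$ if $y\in D_1$. Pick $C_j$ on the side not adjacent to $y$ in the relevant layer. Because $|C_j|\ge3$, this supplies at least two neighbours of $x^{(a)}$ that are non-adjacent to $y^{(b)}$ and distinct from it.

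The main obstacle is the bookkeeping of layer $1$ versus layers $\ge2$, and of $a=1$ versus $a\ne1$. I would check these subcases explicitly, using $t\ge2$ and $|C_i|\ge3$ to guarantee two excluded neighbours in each case.
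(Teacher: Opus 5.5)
Your overall strategy is the paper's: exhibit at least two neighbours of $x^{(a)}$ that are not adjacent to $y^{(b)}$, which gives $\lambda_X(x^{(a)},y^{(b)})\le r-2=M-1$. Your Case~3 is correct. In layer $a$, in both $H$ and $H'$, the vertex $y^{(b)}$ misses an entire side $C_j\times\{a\}$. Since $|C_j|\ge 3$, at least two vertices of that side other than $x^{(a)}$ remain, and all are adjacent to $x^{(a)}$. The paper counts all $t|C_1|-1$ such vertices in $S\times[t]$, but one layer is enough. Your Case~1 is only a setup that defers to Cases~2 and~3; note also that ``it is enough to produce one further neighbour'' holds only when $x^{(a)}\sim y^{(b)}$.

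The gap is in Case~2, when the non-neighbour $z\in S$ of $y$ supplied by (C3) is $x$ itself. Then $xy\notin E(G)$, so $y^{(b)}$ is not adjacent to $x^{(a)}$ and cannot be one of the excluded neighbours. The fibre vertices $x^{(c)}$ with $c\ne a$ number only $t-1$, which equals $1$ when $t=2$. So for $t=2$ you have exhibited a single excluded neighbour, which gives only $\lambda\le M$, not $\lambda<M$.

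This is not just bookkeeping, because (C1) and (C3) alone do not rescue the case. Suppose $y\in D_3$ were adjacent to every vertex of $S\setminus\{x\}$ and $N_G(x)\subseteq N_G(y)$. Then for $t=2$ one computes $\lambda_H(x^{(a)},y^{(b)})=2\lambda_G(x,y)=2\ell=M$.

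The missing ingredient is the WQH balance condition (P5). Since $d_{C_1}(y)=d_{C_2}(y)$ and $|C_1|=|C_2|$, a non-neighbour of $y$ on one side forces a non-neighbour on the other side. Hence $y$ has a non-neighbour $z\in S\setminus\{x\}$. Then $z^{(1)},\dots,z^{(t)}$ are $t\ge 2$ neighbours of $x^{(a)}$, because $S$ is a clique. None of them is adjacent to $y^{(b)}$, because the switch does not touch adjacencies between $S\times[t]$ and $D_3\times[t]$. This is exactly how the paper closes the $D_3$ case.
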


\begin{proof}
	It suffices to treat $x\in C_1$; the other case is symmetric.  We have
	$d_X(x^{(a)})=M+1$.
	
	First let $y\in D_1\cup D_2$.  In both $H$ and $H'$, the vertex $y^{(b)}$ is
	adjacent to exactly $t|C_1|$ vertices of $S\times[t]$.  Since $x^{(a)}$ is
	adjacent to all vertices of $S\times[t]$ except itself, at least
	$t|C_1|-1$ neighbours of $x^{(a)}$ are not adjacent to $y^{(b)}$.  Therefore
	\[
	\lambda_X(x^{(a)},y^{(b)})
	\le M+1-(t|C_1|-1)<M,
	\]
	because $t\ge 2$ and $|C_1|\ge 3$ imply $t|C_1|-1>1$.
	
	Now let $y\in D_3$.  By assumption \textup{(C3)}, $y$ is not adjacent to every
	vertex of $S$.  Since $d_{C_1}(y)=d_{C_2}(y)$ and $|C_1|=|C_2|$, there exists
	$z\in S\setminus\{x\}$ such that $zy\notin E(G)$.  As $S$ is a clique, all
	vertices $z^{(1)},\ldots,z^{(t)}$ are neighbours of $x^{(a)}$, while none of
	these vertices is adjacent to $y^{(b)}$.  Hence
	\[
	\lambda_X(x^{(a)},y^{(b)})\le M+1-t<M,
	\]
	because $t\ge2$.
\end{proof}

\begin{lem}\label{lem:clique-H}
	Let $x\in C_i$, where $i\in\{1,2\}$, and suppose that $d_G(x)=\ell$.  For
	$y\in V(G)$ and $a,b\in[t]$ with $x^{(a)}\ne y^{(b)}$, one has
	$\lambda_H(x^{(a)},y^{(b)})=M$
	if and only if
	$y\in C_i$ and $\tau(x)\subseteq\tau(y).$
\end{lem}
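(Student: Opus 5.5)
The plan is to use that $x^{(a)}$ has degree exactly $M+1$ in $H$. Then $\lambda_H(x^{(a)},y^{(b)})=M$ says that all but one neighbour of $x^{(a)}$ are common neighbours with $y^{(b)}$. I will first eliminate $y\notin S$ using Lemma~\ref{lem:clique-D}. For $y\in S$, I will then identify the neighbourhood of $x^{(a)}$ explicitly and compare it with that of $y^{(b)}$, part by part.

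\emph{Step 1: $y\notin S$.} If $y\in D_1\cup D_2\cup D_3$, then Lemma~\ref{lem:clique-D} with $X=H$ gives $\lambda_H(x^{(a)},y^{(b)})<M$. So $\lambda_H(x^{(a)},y^{(b)})=M$ forces $y\in S$. Conversely, the condition $y\in C_i$ already places $y$ in $S$. Hence both sides of the equivalence are false when $y\notin S$, and it remains to treat $y\in S$.

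\emph{Step 2: the neighbourhood of $x^{(a)}$.} From the definitions of the clique extension, (P3)/(P4), and the fact that $S$ is a clique by (C1), we get
\[
N_H(x^{(a)})=\bigl((S\times[t])\setminus\{x^{(a)}\}\bigr)\sqcup(D_i\times[t])\sqcup(\tau(x)\times[t]).
\]
Now let $y\in S$ with $y^{(b)}\ne x^{(a)}$. If $y\ne x$, then $y^{(b)}$ is adjacent to $x^{(a)}$ because $S$ is a clique; if $y=x$ and $b\ne a$, it is adjacent by the clique-extension rule. Since $y^{(b)}$ is not its own neighbour, it is a neighbour of $x^{(a)}$ that is not a common neighbour. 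Therefore
\[
\lambda_H(x^{(a)},y^{(b)})=M-\#\bigl\{z\in N_H(x^{(a)})\setminus\{y^{(b)}\}: z\notin N_H(y^{(b)})\bigr\}.
\]
So $\lambda_H(x^{(a)},y^{(b)})=M$ if and only if $N_H(x^{(a)})\setminus\{y^{(b)}\}\subseteq N_H(y^{(b)})$.

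\emph{Step 3: checking the inclusion on each of the three parts.}
\begin{itemize}
\item On $(S\times[t])\setminus\{x^{(a)},y^{(b)}\}$ the inclusion holds automatically, because $S$ is a clique and distinct copies of the same vertex are adjacent.
\item On $D_i\times[t]$, the neighbourhood of $y^{(b)}$ is $D_j\times[t]$, where $y\in C_j$. Since $D_1$ and $D_2$ are disjoint and $D_i\neq\emptyset$ by (C1), the inclusion $D_i\times[t]\subseteq N_H(y^{(b)})$ holds if and only if $j=i$.
\item On $\tau(x)\times[t]$, the vertex $y^{(b)}$ is adjacent exactly to $\tau(y)\times[t]$, so the inclusion holds if and only if $\tau(x)\subseteq\tau(y)$.
\end{itemize}
Combining the three parts gives the claimed equivalence. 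The case $y=x$, $b\ne a$ is consistent, since then both conditions hold trivially.

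The main point requiring care is the bookkeeping in Step 2: excluding $x^{(a)}$ and $y^{(b)}$ themselves from the common neighbourhood, and making sure the count is exactly $M$ rather than merely at most $M$. Once adjacency of $x^{(a)}$ and $y^{(b)}$ is confirmed for every $y\in S$, the rest is a direct comparison of explicit neighbourhoods.
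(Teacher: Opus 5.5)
Your proof is correct and takes essentially the paper's route. Lemma~\ref{lem:clique-D} disposes of $y\notin S$, and for $y\in S$ the equality $\lambda_H(x^{(a)},y^{(b)})=M$ reduces, via adjacency and $d_H(x^{(a)})=M+1$, to a neighbourhood containment governed by the $D_i$-part and by $\tau(x)\subseteq\tau(y)$; the only difference is bookkeeping: the paper handles $y\in C_{3-i}$ by a separate counting bound and, for $y\in C_i$, passes through $\lambda_H(x^{(a)},y^{(b)})=t\lambda_G(x,y)+2(t-1)$ to the containment $N_G(x)\setminus\{y\}\subseteq N_G(y)$ in $G$, whereas you run the containment test uniformly in $H$ (as the paper itself does in Lemma~\ref{lem:clique-Hprime}).
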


\begin{proof}
	Assume $x\in C_1$; the case $x\in C_2$ is symmetric.  By
	Lemma~\ref{lem:clique-D}, the equality cannot hold when
	$y\in D_1\cup D_2\cup D_3$.
	
	If $y\in C_2$, then every vertex of $D_1\times[t]$ is adjacent to $x^{(a)}$ and
	not adjacent to $y^{(b)}$.  Since $D_1\ne\varnothing$ and $t\ge 2$,
	\[
	\lambda_H(x^{(a)},y^{(b)})\le M+1-t<M.
	\]
	Thus equality can hold only when $y\in C_1$.
	
	Let $y\in C_1$.  If $y=x$, then $a\ne b$, and a direct count gives
	\[
	\lambda_H(x^{(a)},x^{(b)})=t\,d_G(x)+(t-2)=M.
	\]
	Now suppose that $y\ne x$.  Since $S$ is a clique, $x$ and $y$ are adjacent in
	$G$, and
	\[
	\lambda_H(x^{(a)},y^{(b)})=t\lambda_G(x,y)+2(t-1).
	\]
	As $d_G(x)=\ell$, this is equal to $M$ if and only if
	$\lambda_G(x,y)=\ell-1$.  Since $xy\in E(G)$, the latter equality is equivalent
	to
	$N_G(x)\setminus\{y\}\subseteq N_G(y).$
	For vertices $x,y\in C_1$, the neighbourhoods in $S$ and in $D_1$ already have
	this containment property, and the remaining condition is precisely
	$\tau(x)\subseteq\tau(y)$.
\end{proof}

\begin{lem}\label{lem:clique-Hprime}
	Let $x\in C_i$, where $i\in\{1,2\}$, and suppose that $d_G(x)=\ell$.  Let
	$y\in S$ and let $a,b\in[t]$ be such that exactly one of $a,b$ is equal to $1$.
	Then
	$\lambda_{H'}(x^{(a)},y^{(b)})=M$
	if and only if
	$y\in C_{3-i}$ and $\tau(x)\subseteq\tau(y).$
\end{lem}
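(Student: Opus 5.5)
The plan is a direct count of common neighbours in $H'$, in the same spirit as the proof of Lemma~\ref{lem:clique-H}. We keep track of which copies are switched. Only the first-layer vertices $C_1\times\{1\}$ and $C_2\times\{1\}$ change their adjacencies to $D_1^H\cup D_2^H$. The other copies $x^{(a)}$ with $a\ge2$ lie in $D_3^H$ and keep their original neighbourhoods. We treat $x\in C_1$; the case $x\in C_2$ follows by interchanging the indices $1$ and $2$. Let $y\in C_j$. Since exactly one of $a,b$ equals $1$, we have $a\ne b$, so $x^{(a)}\ne y^{(b)}$ even when $y=x$.

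\textbf{Step 1: the neighbourhoods in $H'$.} Since $S$ is a clique in $G$, $S\times[t]$ is a clique in $H$, and switching does not alter adjacencies inside $S\times[t]$. So both $x^{(a)}$ and $y^{(b)}$ are adjacent to every vertex of $S\times[t]$ other than themselves, and these neighbourhoods share exactly $t|S|-2$ vertices. Their neighbours in $D_3\times[t]$ are unchanged by switching, namely $\tau(x)\times[t]$ and $\tau(y)\times[t]$, which share $t|\tau(x)\cap\tau(y)|$ vertices.

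\textbf{Step 2: the $D_1^H\cup D_2^H$ part, the only place where the hypothesis on $a,b$ enters.}
\begin{itemize}
\item If $a=1$ and $b\ge2$: in $H'$, $x^{(1)}$ is adjacent to $D_2\times[t]$, while $y^{(b)}$ is adjacent to $D_j\times[t]$. They share $t|D_2|$ vertices if $j=2$ and none if $j=1$.
\item If $a\ge2$ and $b=1$: $x^{(a)}$ is adjacent to $D_1\times[t]$, while $y^{(1)}$ is adjacent to $D_{3-j}\times[t]$. Again they share $t|D_1|$ vertices if $j=2$ and none otherwise.
\end{itemize}
Since $|D_1|=|D_2|$, in both cases
\[
\lambda_{H'}(x^{(a)},y^{(b)})=t|S|-2+t|D_1|\,[\,y\in C_2\,]+t\,|\tau(x)\cap\tau(y)|,
\]
where $[\,\cdot\,]$ equals $1$ if the condition holds and $0$ otherwise.

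\textbf{Step 3: comparison with $M$.} Because $S$ is a clique,
\[
d_G(x)=\ell=|S|-1+|D_1|+|\tau(x)|,
\qquad\text{so}\qquad
M=t\ell+t-2=t|S|-2+t|D_1|+t|\tau(x)|.
\]
Comparing this with the formula in Step 2, $\lambda_{H'}=M$ holds exactly when two conditions are met.
\begin{itemize}
\item The $D$-term must be present, so $y\in C_2$. This uses $D_1\ne\varnothing$ from (C1): otherwise the term $t|D_1|$ could be missing and equality could still hold.
\item We need $|\tau(x)\cap\tau(y)|=|\tau(x)|$, that is, $\tau(x)\subseteq\tau(y)$.
\end{itemize}
This is the claimed criterion with $C_{3-i}=C_2$.

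\textbf{Main obstacle.} The only delicate point is the bookkeeping of which copy is switched. In particular, the case $y=x$ falls under $y\in C_1$ and gives $\lambda_{H'}=t|S|-2+t|\tau(x)|<M$, consistent with the statement. Unlike in Lemma~\ref{lem:clique-H}, no separate appeal to Lemma~\ref{lem:clique-D} is needed, since here $y\in S$.
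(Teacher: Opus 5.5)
Your proof is correct and uses essentially the same neighbourhood bookkeeping as the paper: the switched first-layer copy versus the unswitched copy, with the clique $S\times[t]$ and the $D_3$-types handled separately. The only difference is packaging. You compute the exact value $t|S|-2+t|D_1|\,[y\in C_2]+t|\tau(x)\cap\tau(y)|$ and compare it with $M$. The paper instead bounds the case $y\in C_1$ by exhibiting $t$ neighbours of $x^{(a)}$ that $y^{(b)}$ misses. For $y\in C_2$ it uses that $\lambda_{H'}(x^{(a)},y^{(b)})=d_{H'}(x^{(a)})-1$ for adjacent vertices means $N_{H'}(x^{(a)})\setminus\{y^{(b)}\}\subseteq N_{H'}(y^{(b)})$.
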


\begin{proof}
	Assume $x\in C_1$; the case $x\in C_2$ is symmetric.  If $y\in C_1$, then the
	following obstruction occurs.  When $a=1$, all vertices of $D_2\times[t]$ are
	neighbours of $x^{(a)}$ and non-neighbours of $y^{(b)}$; when $b=1$, all vertices
	of $D_1\times[t]$ are neighbours of $x^{(a)}$ and non-neighbours of $y^{(b)}$.
	In either case,
	\[
	\lambda_{H'}(x^{(a)},y^{(b)})\le M+1-t<M,
	\]
	because $t\ge2$.  Thus equality can hold only when $y\in C_2$.
	
	Let $y\in C_2$.  The vertices $x^{(a)}$ and $y^{(b)}$ are adjacent, and
	$d_{H'}(x^{(a)})=M+1$.  Hence
	$\lambda_{H'}(x^{(a)},y^{(b)})=M$ if and only if every neighbour of $x^{(a)}$
	except $y^{(b)}$ is also a neighbour of $y^{(b)}$.  If $a=1$, both vertices are
	adjacent to all of $D_2\times[t]$; if $b=1$, both vertices are adjacent to all of
	$D_1\times[t]$.  In both cases their adjacencies to $S\times[t]$, except for
	the two vertices themselves, are forced by the clique $S$, and their adjacencies
	to $D_3\times[t]$ are governed by $\tau(x)$ and $\tau(y)$.  Therefore the above
	containment is equivalent to $\tau(x)\subseteq\tau(y)$.
\end{proof}

\begin{lem}\label{lem:clique-template}
	Fix $z\in D_1\cup D_2\cup D_3$ and $b\in[t]$.  Let $B\subseteq D_3$.  For
	$a\in\{1,2\}$, the value
	$\lambda_H(z^{(b)},y^{(1)})$
	is independent of the choice of $y\in C_a$ with $\tau(y)=B$; denote this value,
	when such a vertex exists, by $\alpha_a(B)$.  Moreover, for every
	$y\in C_a$ with $\tau(y)=B$,
	$\lambda_{H'}(z^{(b)},y^{(1)})=\alpha_{3-a}(B).$
\end{lem}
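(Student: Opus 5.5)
We compute $\lambda_H(z^{(b)},y^{(1)})$ directly by splitting the common neighbourhood according to the blocks $S\times[t]$, $D_1\times[t]$, $D_2\times[t]$, $D_3\times[t]$. Two facts drive the count. First, in $H$ adjacency between $(u,i)$ and $(v,j)$ with $u\ne v$ depends only on $uv\in E(G)$. Second, $y^{(1)}$ is adjacent to $y^{(j)}$ for $j\neq 1$.

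For $y\in C_a$, the neighbourhood of $y^{(1)}$ in $H$ is determined by three pieces of data. The first is $(S\setminus\{y\})\times[t]$ together with $\{y\}\times\{2,\dots,t\}$; this is all of $S\times[t]$ except $y^{(1)}$, since $S$ is a clique. The second is $D_a\times[t]$, by (P3)/(P4). The third is $\tau(y)\times[t]$.

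Since $z\notin S$, the vertex $y^{(1)}$ is not a neighbour of $z^{(b)}$ and does not matter. Hence
\[
\lambda_H(z^{(b)},y^{(1)})=\bigl|N_H(z^{(b)})\cap (S\times[t])\bigr|+\bigl|N_H(z^{(b)})\cap (D_a\times[t])\bigr|+\bigl|N_H(z^{(b)})\cap(\tau(y)\times[t])\bigr|.
\]
The first term does not involve $y$ at all. The second depends only on $a$, and the third only on $B=\tau(y)$. This proves the independence claim and defines $\alpha_a(B)$.

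For $H'$, the neighbourhood of $y^{(1)}$ changes only in that $D_a\times[t]$ is replaced by $D_{3-a}\times[t]$; the $S$-part and the $D_3$-part are untouched. We then check that $N_{H'}(z^{(b)})$ agrees with $N_H(z^{(b)})$ on each relevant piece.

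On $D_1\times[t]$, $D_2\times[t]$ and $\tau(y)\times[t]\subseteq D_3\times[t]$ the edges are unchanged, because switching only alters edges between $C_1^H\cup C_2^H$ and $D_1^H\cup D_2^H$. So on these pieces $N_{H'}(z^{(b)})=N_H(z^{(b)})$.

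The only subtlety is the block $S\times[t]$, where $z^{(b)}$ may have changed neighbours in $S\times\{1\}$ when $z\in D_1\cup D_2$. In that case $z^{(b)}$ has exactly $t|C_1|$ neighbours in $S\times[t]$ in both graphs, since $|C_1|=|C_2|$. When $z\in D_3$ its neighbours are untouched.

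The $S$-term therefore has the same value in $H'$ as in $H$, and we obtain
\[
\lambda_{H'}(z^{(b)},y^{(1)})=|N_H(z^{(b)})\cap(S\times[t])|+|N_H(z^{(b)})\cap(D_{3-a}\times[t])|+|N_H(z^{(b)})\cap(B\times[t])|=\alpha_{3-a}(B).
\]

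This last identification is the main point to verify. It needs a vertex $y'\in C_{3-a}$ with $\tau(y')=B$ for $\alpha_{3-a}(B)$ to be defined as a value. Failing that, we interpret $\alpha_{3-a}(B)$ by the same formula, which the decomposition above shows depends only on $(3-a,B)$.
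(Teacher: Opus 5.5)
Your block decomposition of $N_H(y^{(1)})$ is the same as the paper's. However, the step ``since $z\notin S$, the vertex $y^{(1)}$ is not a neighbour of $z^{(b)}$'' is false, and your displayed formula for $\lambda_H(z^{(b)},y^{(1)})$ is wrong as a result. By (P3)/(P4), every $z\in D_a$ is adjacent in $G$ to every $y\in C_a$, and every $z\in B=\tau(y)$ is adjacent to $y$. In these cases $y^{(1)}\in N_H(z^{(b)})$. Since the $S$-part of $N_H(y^{(1)})$ is $S\times[t]\setminus\{y^{(1)}\}$, the first term must be
\[
\bigl|N_H(z^{(b)})\cap(S\times[t])\bigr|-[zy\in E(G)],
\]
where $[P]$ is $1$ if $P$ holds and $0$ otherwise. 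This indicator equals $[z\in D_a]$ for $z\in D_1\cup D_2$ and $[z\in B]$ for $z\in D_3$. It is exactly the ``possible exclusion of $y^{(1)}$ from $S\times[t]$'' that the paper's proof handles explicitly. It still depends only on $(a,B)$, so the independence claim survives. But your argument asserts that the first term does not involve $y$ at all, and that is not true.

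The omission matters more in the second half. Your claim that the $S$-term has the same value in $H'$ as in $H$ holds only for the truncated count $|N(z^{(b)})\cap(S\times[t])|$. Take $z\in D_c$ with $c\in\{1,2\}$ and $q:=|C_1|=|C_2|$. For $y\in C_a$, the correct $S$-term is $tq-[a=c]$ in $H$ but $tq-[a\ne c]$ in $H'$, because the switch changes whether $z^{(b)}$ is adjacent to $y^{(1)}$. So for fixed $y$ the $S$-term does change under switching. What is true is that its $H'$-value equals the $H$-value $tq-[3-a=c]$ for a vertex of $C_{3-a}$; that is, the indicator is exchanged by $a\mapsto 3-a$. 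For $z\in D_3$ it is $[z\in B]$ in both graphs. Your final identity $\lambda_{H'}(z^{(b)},y^{(1)})=\alpha_{3-a}(B)$ is true, but your computation does not prove it. Both sides of your computation omit an indicator, and checking that the two omitted indicators coincide is precisely the missing verification. To repair the proof, add the term $-[z^{(b)}y^{(1)}\in E(X)]$ for $X\in\{H,H'\}$ and carry out this check. Your remark that $\alpha_{3-a}(B)$ might be undefined is fair. It is harmless in the proof of Theorem~\ref{thm:clique-wqh}, since the relevant types occur equally often in $C_1$ and $C_2$.
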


\begin{proof}
	For a vertex $y\in S$ with $\tau(y)=B$, the first-layer neighbourhood is
	\[
	N_H(y^{(1)})=
	\begin{cases}
		(S\times[t]\setminus\{y^{(1)}\})\cup(D_1\times[t])\cup(B\times[t]),
		& y\in C_1,\\[1mm]
		(S\times[t]\setminus\{y^{(1)}\})\cup(D_2\times[t])\cup(B\times[t]),
		& y\in C_2,
	\end{cases}
	\]
	and
	\[
	N_{H'}(y^{(1)})=
	\begin{cases}
		(S\times[t]\setminus\{y^{(1)}\})\cup(D_2\times[t])\cup(B\times[t]),
		& y\in C_1,\\[1mm]
		(S\times[t]\setminus\{y^{(1)}\})\cup(D_1\times[t])\cup(B\times[t]),
		& y\in C_2.
	\end{cases}
	\]
	These descriptions show first that $\lambda_H(z^{(b)},y^{(1)})$ depends only on
	the side of $y$ and on the set $B$.  Indeed, if $z\in D_1\cup D_2$, then the
	adjacency of $z$ to $S$ depends only on whether $y\in C_1$ or $y\in C_2$; if
	$z\in D_3$, then whether $z$ is adjacent to $y$ is determined by the condition
	$z\in B$.
	
	The same displayed neighbourhoods also give the exchange relation after
	switching.  For $z\in D_1$ or $z\in D_2$, the switch interchanges the first-layer
	sets $C_1\times\{1\}$ and $C_2\times\{1\}$ in the neighbourhood of $z^{(b)}$ and
	interchanges $D_1\times[t]$ and $D_2\times[t]$ in the neighbourhood of
	$y^{(1)}$.  This is exactly the effect of replacing the side $C_a$ by
	$C_{3-a}$ in the unswitched graph.  If $z\in D_3$, the neighbourhood of
	$z^{(b)}$ is unchanged, and the only side-dependent part of $N_{H'}(y^{(1)})$
	is again obtained by interchanging $D_1\times[t]$ and $D_2\times[t]$.  The
	possible exclusion of $y^{(1)}$ from $S\times[t]$ contributes in the same way
	because $z\in\tau(y)$ is determined by $B$.  Hence
	$\lambda_{H'}(z^{(b)},y^{(1)})=\alpha_{3-a}(B)$.
\end{proof}

\begin{proof}[Proof of Theorem~\ref{thm:clique-wqh}]
We first verify that $H$ is WQH-type with respect to the displayed partition.
Under assumption
\textup{(C1)}, the partition displayed in Theorem~\ref{thm:clique-wqh} is a
WQH-type partition of $H$: conditions (P1), (P3), and (P4) are immediate; (P2)
follows from the fact that $S$ is a clique and $|C_1|=|C_2|$; and (P5) follows
from the original WQH condition on $D_3$, while vertices in
$S\times\{2,\ldots,t\}$ see $|C_1|$ vertices in $C_1^H$ and $|C_2|$ vertices in
$C_2^H$.
 Hence $H$ is WQH-type with respect to the
displayed partition, and $H'$ is its WQH-switched graph.  By
Theorem~\ref{thm:wqh-cospectral}, $H$ and $H'$ are cospectral.  It remains to
prove that they are non-isomorphic.

	By assumption \textup{(C2)}, let $\ell$ be the largest integer such that
	\[
	\bigl(f_{1,\ell}(A)\bigr)_{A\subseteq D_3}
	\ne
	\bigl(f_{2,\ell}(A)\bigr)_{A\subseteq D_3}.
	\]
	Set
	\[
	r:=t\ell+(t-1),
	\qquad
	M:=r-1=t\ell+t-2,
	\]
	and write
	\[
	S_1:=S\times\{1\},
	\qquad
	D_H:=V(H)\setminus S_1.
	\]
	
	By Lemma~\ref{lem:external-test}~(b), applied to the WQH partition of $H$, it is
	enough to show that
	\[
	\Lambda_{r,H}(S_1,D_H)\sqcup\Lambda_{r,H}(D_H,S_1)
	\ne
	\Lambda_{r,H'}(S_1,D_H)\sqcup\Lambda_{r,H'}(D_H,S_1).
	\]
	We prove that the value $M$ occurs with larger multiplicity on the left-hand
	side than on the right-hand side.
	
	Let
	\[
	\kappa:=|S|-1+|D_1|=|S|-1+|D_2|.
	\]
	Since $S$ is a clique, for every $u\in S$ we have
	\begin{equation}\label{eq:degree-type}
		d_G(u)=\kappa+|\tau(u)|.
	\end{equation}
	For $A\subseteq D_3$, define
	\[
	F_i(A):=\#\{z\in C_i:A\subseteq\tau(z)\}\qquad (i=1,2).
	\]
	We claim that, whenever $f_{1,\ell}(A)+f_{2,\ell}(A)>0$,
	\begin{equation}\label{eq:F-difference}
		F_1(A)-F_2(A)=f_{1,\ell}(A)-f_{2,\ell}(A).
	\end{equation}
	Indeed, then every degree-$\ell$ vertex of type $A$ satisfies
	$|A|=\ell-\kappa$ by \eqref{eq:degree-type}.  If $B\supsetneq A$, then
	$|B|>\ell-\kappa$, so every vertex of type $B$ has degree larger than $\ell$.
	By the maximality of $\ell$, the numbers of vertices of type $B$ in $C_1$ and
	in $C_2$ are equal.  Hence all proper-superset contributions cancel in
	$F_1(A)-F_2(A)$, leaving only the degree-$\ell$ contribution of type $A$.  This
	proves \eqref{eq:F-difference}.
	
	Let $m_X$ be the multiplicity of $M$ in $\Lambda_{r,X}(S_1,D_H)$, where
	$X\in\{H,H'\}$.  By Lemmas~\ref{lem:clique-D} and~\ref{lem:clique-H}, if
	$x\in C_i$, $d_G(x)=\ell$, and $\tau(x)=A$, then for each $j\in\{2,\ldots,t\}$
	the number of vertices $y^{(j)}$ with
	$\lambda_H(x^{(1)},y^{(j)})=M$ is $F_i(A)$.  Therefore
	\begin{equation}\label{eq:mH}
		m_H=(t-1)\sum_{A\subseteq D_3}
		\bigl(f_{1,\ell}(A)F_1(A)+f_{2,\ell}(A)F_2(A)\bigr).
	\end{equation}
	Similarly, by Lemmas~\ref{lem:clique-D} and~\ref{lem:clique-Hprime},
	\begin{equation}\label{eq:mHprime}
		m_{H'}=(t-1)\sum_{A\subseteq D_3}
		\bigl(f_{1,\ell}(A)F_2(A)+f_{2,\ell}(A)F_1(A)\bigr).
	\end{equation}
	Subtracting \eqref{eq:mHprime} from \eqref{eq:mH} gives
	\[
	m_H-m_{H'}
	=(t-1)\sum_{A\subseteq D_3}
	\bigl(f_{1,\ell}(A)-f_{2,\ell}(A)\bigr)
	\bigl(F_1(A)-F_2(A)\bigr).
	\]
	For those $A$ with $f_{1,\ell}(A)=f_{2,\ell}(A)=0$, the summand is zero.  For
	the remaining $A$, use \eqref{eq:F-difference}.  Thus
	\begin{equation}\label{eq:m-positive}
		m_H-m_{H'}
		=(t-1)\sum_{A\subseteq D_3}
		\bigl(f_{1,\ell}(A)-f_{2,\ell}(A)\bigr)^2>0.
	\end{equation}
	Here the strict positivity uses $t\ge2$ and the choice of $\ell$.
	
	It remains to count the reverse ordered pairs.  Let $\mu_X$ be the multiplicity
	of $M$ in $\Lambda_{r,X}(D_H,S_1)$, where $X\in\{H,H'\}$.  Split the
	degree-$r$ vertices in $D_H$ into
	\[
	U_C:=\{x^{(j)}:x\in S,\ d_G(x)=\ell,\ j\in\{2,\ldots,t\}\},
	\]
	\[
	U_D:=\{z^{(j)}:z\in D_1\cup D_2\cup D_3,\ d_G(z)=\ell,\ j\in[t]\}.
	\]
	Write
	\[
	\mu_X=\nu_X^C+\nu_X^D,
	\]
	where $\nu_X^C$ and $\nu_X^D$ denote the contributions from $U_C$ and $U_D$,
	respectively.
	
	For $U_C$, the same argument as above, now using ordered pairs
	$(x^{(j)},y^{(1)})$ with $j\ge2$, gives
	\begin{equation}\label{eq:nuC-positive}
		\nu_H^C-\nu_{H'}^C
		=(t-1)\sum_{A\subseteq D_3}
		\bigl(f_{1,\ell}(A)-f_{2,\ell}(A)\bigr)^2.
	\end{equation}
	
	We now show that
	\begin{equation}\label{eq:nuD-equal}
		\nu_H^D=\nu_{H'}^D.
	\end{equation}
	Fix $u=z^{(j)}\in U_D$.  If $y\in S$ and $d_G(y)<\ell$, then
	\[
	d_H(y^{(1)})=d_{H'}(y^{(1)})=td_G(y)+(t-1)<M,
	\]
	so neither $\lambda_H(u,y^{(1)})$ nor $\lambda_{H'}(u,y^{(1)})$ can be equal
	to $M$.  If $d_G(y)=\ell$, then Lemma~\ref{lem:clique-D}, applied with the
	roles of the two vertices interchanged, again gives
	\[
	\lambda_H(u,y^{(1)})<M,
	\qquad
	\lambda_{H'}(u,y^{(1)})<M.
	\]
	
	It remains to consider vertices $y\in S$ with $d_G(y)>\ell$.  Let
	$B:=\tau(y)$.  By \eqref{eq:degree-type}, such a type $B$ satisfies
	$|B|>\ell-\kappa$.  Hence, by the maximality of $\ell$, the number of vertices
	of type $B$ in $C_1$ equals the number of vertices of type $B$ in $C_2$.  By
	Lemma~\ref{lem:clique-template}, for this fixed $u$ the switch interchanges the
	common-neighbour values attached to the two sides for each fixed type $B$.
	Since the two sides contain equally many vertices of type $B$, the total number
	of occurrences of the value $M$ contributed by degree larger than $\ell$ is the
	same in $H$ and $H'$.  This proves \eqref{eq:nuD-equal}.
	
	Combining \eqref{eq:nuC-positive} and \eqref{eq:nuD-equal}, we obtain
	\[
	\mu_H-\mu_{H'}
	=(t-1)\sum_{A\subseteq D_3}
	\bigl(f_{1,\ell}(A)-f_{2,\ell}(A)\bigr)^2>0.
	\]
	Together with \eqref{eq:m-positive}, this shows that the multiplicity of $M$ in
	\[
	\Lambda_{r,H}(S_1,D_H)\sqcup\Lambda_{r,H}(D_H,S_1)
	\]
	is strictly larger than its multiplicity in
	\[
	\Lambda_{r,H'}(S_1,D_H)\sqcup\Lambda_{r,H'}(D_H,S_1).
	\]
	By Lemma~\ref{lem:external-test}~(b), $H$ and $H'$ are non-isomorphic.
\end{proof}

\section{Weak tensor products}\label{sec:weak-tensor}

Let $G$ be a finite simple graph with adjacency matrix $A$.  Let $\cH$ be a
finite graph, possibly with loops but without multiple edges, with adjacency
matrix $A_{\cH}$.  The \emph{weak tensor product} $\cH\times G$ is the graph on
$V(\cH)\times V(G)$ with adjacency matrix
$$A_{\cH}\otimes A.$$
Equivalently, $(u,x)$ and $(v,y)$ are adjacent if and only if
$(A_{\cH})_{uv}=1$ and $xy\in E(G)$.  Since $A$ has zero diagonal,
$\cH\times G$ is simple even when $\cH$ has loops.  In particular, if
$\mathcal K_t^\circ$ is the complete graph on $t$ vertices with a loop at every
vertex, then $A_{\mathcal K_t^\circ}=J_t$, and hence
$A_{\mathcal K_t^\circ\times G}=J_t\otimes A$.  After the natural identification
of $V(\mathcal K_t^\circ)\times V(G)$ with $V(G)\times[t]$, this is precisely
the $t$-coclique extension of $G$.

For $u,v\in V(\cH)$, define
\[
\lambda_{\cH}^+(u,v):=(A_{\cH}^2)_{uv}.
\]
If $\cH$ is simple, then this is the usual common-neighbour number in $\cH$; in
general it counts length-two walks from $u$ to $v$.

In this section, let $G$ be WQH-type with vertex partition
$C_1\sqcup C_2\sqcup D_1\sqcup D_2\sqcup D_3$.  Put
$S:=C_1\cup C_2$ and $Y:=V(G)\setminus S$.
Let $G'$ be the WQH-switched graph with respect to $(G,C_1,C_2)$, and let $Q$
be the WQH-switching matrix.  Then the adjacency matrix of $G'$ is
$A':=QAQ.$
Set
\[
A^{\mathrm r}:=QA.
\]
For $i\in V(\cH)$, let $E_{ii}$ be the diagonal matrix unit corresponding to
$i$, and define
\begin{equation}\label{eq:Qhat}
\widehat Q_i:=I\otimes I+E_{ii}\otimes(Q-I).
\end{equation}
Thus $\widehat Q_i$ acts as $Q$ on the fibre $\{i\}\times V(G)$ and as the
identity on all other fibres.  We define $(\cH\times G)'_i$ to be the graph, if
it exists, with adjacency matrix
\[
\widehat Q_i(A_{\cH}\otimes A)\widehat Q_i.
\]
The theorem below proves that this matrix is indeed an adjacency matrix under
the stated assumptions.

\begin{thm}\label{thm:WQH-weak-tensor}
Let $G$ be a WQH-type graph with vertex partition
$C_1\sqcup C_2\sqcup D_1\sqcup D_2\sqcup D_3$, and keep the notation above.
Assume that:
\begin{enumerate}
\item[\rm(W1)] for every $u\in S$,
$d_{C_1}(u)=d_{C_2}(u)$;
\item[\rm(W2)]
$\overline\Lambda_G(S)=\overline\Lambda_{G'}(S)$;
\item[\rm(W3)] there exists $x_0\in S$ such that
$d_G(x_0)>\max_{x,y\in S}(A^{\mathrm r} A)_{xy}$.
\end{enumerate}
Let $\cH$ be a finite graph, possibly with loops but without multiple edges.  For
$i\in V(\cH)$, set
$X_i:=\{i\}\times S.$
If there exists $j\in V(\cH)$ such that $j\ne i$ and
$\lambda_{\cH}^+(i,j)>0$, then the following statements hold:
\begin{enumerate}[\textup{(\alph*)}]
\item The matrix defining $(\cH\times G)'_i$ is the adjacency matrix of the graph
obtained from $\cH\times G$ by WQH-switching on the pair
$(\{i\}\times C_1,\{i\}\times C_2)$.
\item The graphs $\cH\times G$ and $(\cH\times G)'_i$ are cospectral.
\item The external multiset changes:
$\overline\Lambda_{\cH\times G}(X_i)
\ne
\overline\Lambda_{(\cH\times G)'_i}(X_i).$
Moreover, if $|D_1|=|D_2|$, then $\cH\times G$ and $(\cH\times G)'_i$ are
non-isomorphic.
\end{enumerate}
\end{thm}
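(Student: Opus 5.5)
The plan is to read everything off block-matrix identities for $B:=A_{\cH}\otimes A$ and $B'_i:=\widehat Q_iB\widehat Q_i$. Since $Q$ is a symmetric involution, so is $\widehat Q_i$. For $x,y\in V(G)$ and $j\neq i$ the entries of $B'_i$ are
\[
(B'_i)_{(i,x),(i,y)}=(A_{\cH})_{ii}(QAQ)_{xy},\qquad
(B'_i)_{(i,x),(j,y)}=(A_{\cH})_{ij}(QA)_{xy},
\]
and entries between two fibres different from $i$ are unchanged.

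For (a), the diagonal block is $(A_{\cH})_{ii}A'$, which is the adjacency matrix of $G'$ (or zero), so it is a $0/1$ matrix with zero diagonal. Next I compute $A^{\mathrm r}=QA$ directly. For $x\in C_1$,
\[
(QA)_{xy}=A_{xy}-\tfrac1q d_{C_1}(y)+\tfrac1q d_{C_2}(y),
\]
and symmetrically for $x\in C_2$; rows indexed by $Y$ are unchanged.
\begin{itemize}
\item For $y\in S$ the correction vanishes by (W1).
\item For $y\in D_3$ it vanishes by (P5).
\item For $y\in D_1$ we have $d_{C_1}(y)=q$ and $d_{C_2}(y)=0$, so rows in $C_1$ lose $y$ and rows in $C_2$ gain $y$; the case $D_2$ is symmetric.
\end{itemize}
Hence $QA$ is exactly the $0/1$ matrix in which $S$ is switched against $D_1\cup D_2$. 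Consequently $B'_i$ is the adjacency matrix of the graph obtained from $\cH\times G$ by WQH-switching. The relevant partition has switching classes $\{i\}\times C_1$ and $\{i\}\times C_2$. Its two $D$-classes are $\{k:(A_{\cH})_{ik}=1\}\times D_1$ and $\{k:(A_{\cH})_{ik}=1\}\times D_2$, and everything else goes into the third $D$-class. I would check (P1)--(P5) for this partition: (P2) uses (W1) together with the loop at $i$ if present, and (P5) uses (W1) and (P5) for $G$. Part (b) then follows either from Theorem~\ref{thm:wqh-cospectral} or directly from the orthogonal similarity $B'_i=\widehat Q_iB\widehat Q_i$.

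For (c), I use $(B'_i)^2=\widehat Q_iB^2\widehat Q_i$ and $B^2=A_{\cH}^2\otimes A^2$. For $x\in S$ the external pairs are of two kinds.
\begin{itemize}
\item Pairs $((i,x),(i,y))$ with $y\in Y$. Here $\widehat Q_i$ is the identity on $(i,y)$, so the values are $\lambda^+_{\cH}(i,i)\lambda_{G'}(x,y)$ against $\lambda^+_{\cH}(i,i)\lambda_G(x,y)$. These multisets agree by (W2).
\item Pairs $((i,x),(k,y))$ with $k\neq i$ and $y\in V(G)$. The values are $\lambda^+_{\cH}(i,k)(QA^2)_{xy}$ against $\lambda^+_{\cH}(i,k)(A^2)_{xy}$.
\end{itemize}
So it suffices to show that the multisets $\{(A^2)_{xy}:x\in S,\,y\in V(G)\}$ and $\{(QA^2)_{xy}:x\in S,\,y\in V(G)\}$ differ. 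Multiplying by the positive factor $\lambda^+_{\cH}(i,j)$ then produces a difference in the $k=j$ part. That difference cannot be cancelled by the other fibres $k\ne i$, because each fibre contributes the same scaled pair of multisets; comparing multiplicities of the maximal value in the two multisets settles this.

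To prove the two multisets differ, split $y\in Y$ from $y\in S$. For $y\in Y$, $Q$ acts trivially on the column, so $(QA^2)_{xy}=(QA^2Q)_{xy}=\lambda_{G'}(x,y)$. These columns therefore contribute $\Lambda_{G'}(S,Y)=\Lambda_G(S,Y)$, again by (W2). The difference must therefore come from the $S\times S$ block. There $(A^2)_{x_0x_0}=d_G(x_0)$, while every entry $(QA^2)_{xy}=(A^{\mathrm r}A)_{xy}$ with $x,y\in S$ is smaller by (W3). Comparing the multiplicity of the largest value shows the $S\times S$ multisets differ. Since the $Y$-parts coincide, the full multisets differ. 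This is the step I expect to need the most care: the bookkeeping of how the fibres $k\ne i$ and the scaling by $\lambda^+_{\cH}(i,k)$ interact with the maximal-value argument. Finally, if $|D_1|=|D_2|$, the two $D$-classes of the big partition have equal size, and Lemma~\ref{lem:external-test}~(c) yields non-isomorphism.
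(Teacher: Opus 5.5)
Your proposal is correct. Parts (a) and (b) follow the paper's proof essentially verbatim: the same partition with classes $N_{\cH}(i)\times D_1$ and $N_{\cH}(i)\times D_2$, the same observation that (W1) makes $A^{\mathrm r}=QA$ a $0$-$1$ matrix, the same block form, and the same orthogonal similarity. Your explicit formula for $(QA)_{xy}$ is a useful concrete check of something the paper only asserts.

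In (c) the ingredients are also the same: (W2) for the $Y$-columns, and (W3) together with the largest coefficient for the $S$-columns. The difference is that you organize the pairs $((i,x),(k,y))$, $k\ne i$, by fibre. This forces an extra step. Write $P,P'$ for your two multisets over $S\times V(G)$, and let $cP$ denote $P$ with every entry multiplied by $c$. You need that $P\ne P'$ implies $\bigsqcup_{k\ne i}\lambda^+_{\cH}(i,k)P\ne\bigsqcup_{k\ne i}\lambda^+_{\cH}(i,k)P'$.

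That implication holds, but your ``maximal value'' must be read as the largest value $v$ at which $P$ and $P'$ have different multiplicities. The overall maximum may come from the common $Y$-column part, where the multiplicities agree. Here $v=\max\{(A^2)_{xy}:x,y\in S\}\ge d_G(x_0)>0$. Put $\rho:=\max_{k\ne i}\lambda^+_{\cH}(i,k)$ and let $m_P(v)$ be the multiplicity of $v$ in $P$. Then the multiplicity of $\rho v$ differs between the two unions by $\#\{k\ne i:\lambda^+_{\cH}(i,k)=\rho\}\cdot(m_P(v)-m_{P'}(v))\ne0$. This works because a smaller positive coefficient would need a value above $v$, where the multiplicities agree, and zero coefficients only produce zeros.

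The paper avoids this injectivity step by splitting by column type rather than by fibre. The parts $\mathcal M_1$ (same fibre) and $\mathcal M_2$ (other fibres, $y\in Y$) are unchanged fibre by fibre by (W2). In the remaining part $\mathcal M_3$ (other fibres, $y\in S$), the value $\rho\, d_G(x_0)$ occurs before switching, while every value after switching is at most $\rho(d_G(x_0)-1)$ by (W3).

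Your version isolates a statement about $G$ alone, namely $P\ne P'$, at the cost of the extra injectivity step. The paper's route is shorter.
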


\begin{proof}
Let
$B:=A_{\cH}\otimes A$ and $B_i':=\widehat Q_iB\widehat Q_i.$
We first prove (a) and (b).

Let
\[
N_{\cH}(i):=\{h\in V(\cH):(A_{\cH})_{ih}=1\},
\]
where $i\in N_{\cH}(i)$ is allowed when $i$ has a loop.  Consider the following
partition of $V(\cH\times G)$:
\[
C_1':=\{i\}\times C_1,
\qquad
C_2':=\{i\}\times C_2,
\]
\[
D_1':=N_{\cH}(i)\times D_1,
\qquad
D_2':=N_{\cH}(i)\times D_2,
\]
\[
D_3':=V(\cH\times G)\setminus(C_1'\sqcup C_2'\sqcup D_1'\sqcup D_2').
\]
This is a WQH-type partition.  Conditions (P1), (P3), and (P4) follow directly
from the definition of the weak tensor product and from the WQH partition of
$G$.  Condition (P2) follows from (W1), since the only possible adjacencies
inside $C_1'\cup C_2'$ come from a loop at $i$.  For (P5), let $(h,z)\in D_3'$.
If $h\notin N_{\cH}(i)$, then $(h,z)$ has no neighbours in either $C_1'$ or
$C_2'$.  If $h\in N_{\cH}(i)$ and $z\in D_3$, then equality of the two numbers
of neighbours follows from the WQH condition $d_{C_1}(z)=d_{C_2}(z)$.  If
$h\in N_{\cH}(i)$ and $z\in S$, then it follows from (W1).

By (W1) and the explicit formula for $Q$, the row-switched matrix
$A^{\mathrm r}=QA$ is a $0$-$1$ matrix: on the rows indexed by $C_1\cup C_2$, it swaps the
incidences with $D_1$ and $D_2$ and leaves all other entries unchanged; outside
$C_1\cup C_2$ it is equal to $A$.  Since $Q$ is symmetric, $AQ=(A^{\mathrm r})^\top$ is
also a $0$-$1$ matrix.  The block form of $B_i'$, with blocks indexed by
$V(\cH)$, is
\begin{equation}\label{eq:Bi-prime}
(B_i')_{uv}=
\begin{cases}
(A_{\cH})_{ii}A', & u=v=i,\\[1mm]
(A_{\cH})_{iv}A^{\mathrm r}, & u=i,\ v\ne i,\\[1mm]
(A_{\cH})_{ui}(A^{\mathrm r})^\top, & u\ne i,\ v=i,\\[1mm]
(A_{\cH})_{uv}A, & u\ne i,\ v\ne i.
\end{cases}
\end{equation}
Hence $B_i'$ is a symmetric $0$-$1$ matrix.  Its diagonal is zero because
$A$ and $A'$ have zero diagonal.  Thus $B_i'$ is the adjacency matrix of a
simple graph.  From the WQH partition above and the same block description, this
graph is exactly the graph obtained from $\cH\times G$ by WQH-switching on
$(C_1',C_2')=(\{i\}\times C_1,\{i\}\times C_2)$.  This proves (a).

Since $\widehat Q_i^\top=\widehat Q_i$ and $\widehat Q_i^2=I$, the matrices
$B$ and $B_i'$ are similar.  Hence $\cH\times G$ and $(\cH\times G)'_i$ are
cospectral.  This proves (b).

It remains to prove (c).  Decompose
\[
\overline\Lambda_{\cH\times G}(X_i)=\mathcal M_1\sqcup\mathcal M_2\sqcup\mathcal M_3,
\]
where
\[
\mathcal M_1:=\{\lambda_{\cH\times G}((i,x),(i,y)):x\in S,\ y\in Y\},
\]
\[
\mathcal M_2:=\{\lambda_{\cH\times G}((i,x),(h,y)):x\in S,\ h\ne i,\ y\in Y\},
\]
\[
\mathcal M_3:=\{\lambda_{\cH\times G}((i,x),(h,y)):x\in S,\ h\ne i,\ y\in S\}.
\]
Define $\mathcal M_1',\mathcal M_2',\mathcal M_3'$ analogously for
$(\cH\times G)'_i$.

Since
\[
B^2=(A_{\cH}^2)\otimes A^2,
\qquad
(B_i')^2=\widehat Q_i\bigl((A_{\cH}^2)\otimes A^2\bigr)\widehat Q_i,
\]
we can compare the three parts explicitly.

First, for $x\in S$ and $y\in Y$,
\[
\lambda_{\cH\times G}((i,x),(i,y))
=\lambda_{\cH}^+(i,i)\lambda_G(x,y),
\]
whereas, by \eqref{eq:Bi-prime},
\[
\lambda_{(\cH\times G)'_i}((i,x),(i,y))
=\lambda_{\cH}^+(i,i)\lambda_{G'}(x,y).
\]
By (W2), it follows that $\mathcal M_1=\mathcal M_1'$.

Second, fix $h\ne i$.  For $x\in S$ and $y\in Y$,
\[
\lambda_{\cH\times G}((i,x),(h,y))
=\lambda_{\cH}^+(i,h)\lambda_G(x,y).
\]
On the other hand, by \eqref{eq:Bi-prime},
\[
\lambda_{(\cH\times G)'_i}((i,x),(h,y))
=\lambda_{\cH}^+(i,h)(A^{\mathrm r} A)_{xy}.
\]
Since $y\in Y$ and $Qe_y=e_y$, we have
\[
(A^{\mathrm r} A)_{xy}=(QA^2)_{xy}=(QA^2Q)_{xy}=((A')^2)_{xy}=\lambda_{G'}(x,y).
\]
Using (W2) again, the multiset corresponding to this fixed $h$ is unchanged.
Taking the disjoint union over all $h\ne i$ gives
$\mathcal M_2=\mathcal M_2'$.

It remains to compare $\mathcal M_3$ and $\mathcal M_3'$.  Put
$k_0:=d_G(x_0).$
Choose $\hat h\ne i$ such that
\[
\rho:=\lambda_{\cH}^+(i,\hat h)=\max_{h\ne i}\lambda_{\cH}^+(i,h)>0.
\]
Before switching,
\[
\lambda_{\cH\times G}((i,x_0),(\hat h,x_0))
=\rho\lambda_G(x_0,x_0)
=\rho k_0.
\]
Thus the value $\rho k_0$ occurs in $\mathcal M_3$.

After switching, for any $x,y\in S$ and any $h\ne i$,
\[
\lambda_{(\cH\times G)'_i}((i,x),(h,y))
=\lambda_{\cH}^+(i,h)(A^{\mathrm r} A)_{xy}.
\]
By \textup{(W3)}, and since $(A^{\mathrm r} A)_{xy}$ is a nonnegative integer, we have
$(A^{\mathrm r} A)_{xy}\le k_0-1$.  Moreover, $0\le\lambda_{\cH}^+(i,h)\le\rho$.  Hence
\[
\lambda_{(\cH\times G)'_i}((i,x),(h,y))
\le
\rho(k_0-1)
<
\rho k_0.
\]
So the value $\rho k_0$ does not occur in $\mathcal M_3'$.

Since $\mathcal M_1=\mathcal M_1'$ and $\mathcal M_2=\mathcal M_2'$, while
$\rho k_0$ occurs in $\mathcal M_3$ and not in $\mathcal M_3'$, we obtain
\[
\overline\Lambda_{\cH\times G}(X_i)
\ne
\overline\Lambda_{(\cH\times G)'_i}(X_i).
\]
This proves the asserted change of the external multiset.

Finally assume $|D_1|=|D_2|$.  Then, for the WQH partition of $\cH\times G$
displayed above,
\[
|D_1'|=|N_{\cH}(i)|\,|D_1|=|N_{\cH}(i)|\,|D_2|=|D_2'|.
\]
By Lemma~\ref{lem:external-test}~(c), the change of the external multiset implies
that $\cH\times G$ and $(\cH\times G)'_i$ are non-isomorphic.
\end{proof}

\section{WQH-switching for other graph matrices}\label{sec:other-matrices}

In this section we include some simple consequence of the matrix form of WQH-switching for several
standard graph matrices.  This is analogous to the corresponding discussion for
GM-switching in \cite[Section~3.3]{VANDAM2003241}.

Let $G$ be a WQH-type graph with adjacency matrix $A=A_G$, and let $G'$ be its
WQH-switched graph.  Let $Q$ be the WQH-switching matrix defined in
\eqref{eq:wqh-matrix}.  Thus
\[
A_{G'}=QA_GQ,
\qquad
Q^\top=Q,
\qquad
Q^2=I.
\]
Moreover, since each row of $Q$ has sum one, we have $Q\mathbf 1=\mathbf 1.$
Consequently,
\[
QJQ=J,
\qquad
QIQ=I.
\]

Let $D_G$ denote the diagonal degree matrix of $G$.  For real numbers
$\alpha,\beta,\gamma,\delta$, define
\[
M_{\alpha,\beta,\gamma,\delta}(G)
:=
\alpha A_G+\beta J+\gamma I+\delta D_G.
\]
When $\delta=0$, this is a generalized adjacency matrix in the usual sense.

\begin{prp}\label{prp:wqh-other-matrices}
	Let $G$ be a WQH-type graph, and let $G'$ be its WQH-switched graph.
	
	\begin{enumerate}[\textup{(\alph*)}]
		\item For all real numbers $\alpha,\beta,\gamma$,
		\[
		M_{\alpha,\beta,\gamma,0}(G')
		=
		Q M_{\alpha,\beta,\gamma,0}(G) Q.
		\]
		In particular, $G$ and $G'$ are cospectral with respect to every generalized
		adjacency matrix.
		
		\item Suppose, in addition, that $D_{G'}=QD_GQ.$
		Then, for all real numbers $\alpha,\beta,\gamma,\delta$,
		\[
		M_{\alpha,\beta,\gamma,\delta}(G')
		=
		Q M_{\alpha,\beta,\gamma,\delta}(G) Q.
		\]
		In particular, $G$ and $G'$ are cospectral with respect to the Laplacian
		matrix $L_G:=D_G-A_G$ and the signless Laplacian matrix $L_G^+:=D_G+A_G.$
		
		\item The condition in \textup{(b)} holds, for example, if all vertices in
		$C_1\cup C_2$ have the same degree in $G$.
	\end{enumerate}
\end{prp}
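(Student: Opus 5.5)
Parts (a) and (b) follow from the matrix identities recorded just before the statement, namely $A_{G'}=QA_GQ$, $QJQ=J$ and $QIQ=I$ (the last because $Q^2=I$). For (a) I expand by linearity:
\[
QM_{\alpha,\beta,\gamma,0}(G)Q=\alpha QA_GQ+\beta QJQ+\gamma QIQ=\alpha A_{G'}+\beta J+\gamma I=M_{\alpha,\beta,\gamma,0}(G').
\]
Since $Q^\top=Q=Q^{-1}$, this is an orthogonal similarity, so the two matrices have the same spectrum. For (b) I add the term $\delta QD_GQ=\delta D_{G'}$, which the hypothesis supplies, and conclude in the same way. The Laplacian and signless Laplacian are the cases $(\alpha,\beta,\gamma,\delta)=(\mp1,0,0,1)$.

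The real content is (c). Here I must show that $D_{G'}=QD_GQ$ whenever every vertex of $S=C_1\cup C_2$ has a common degree $k$ in $G$. I will proceed in two steps.

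First I compute $QD_GQ$. Write $D_G=\operatorname{diag}(kI_q,kI_q,D_Y)$ in the block order $(C_1,C_2,Y)$. The top-left $2q\times2q$ block of $Q$ is $\begin{bmatrix}I-\frac1qJ&\frac1qJ\\ \frac1qJ&I-\frac1qJ\end{bmatrix}$; call it $P$. Then $P(kI)P=kP^2=kI$, and the $Y$-block is untouched, so $QD_GQ=D_G$.

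Second I show $D_{G'}=D_G$, which is the step I expect to need care. I cannot invoke the argument of Lemma~\ref{lem:external-test}, since that used $|D_1|=|D_2|$, and that equality is not assumed here. Instead I compute degrees from the matrix: $D_{G'}=\operatorname{diag}(A_{G'}\mathbf 1)$, and
\[
A_{G'}\mathbf 1=QA_GQ\mathbf 1=QA_G\mathbf 1=Q\,(\text{degree vector of }G).
\]
The degree vector of $G$ is $k$ on $S$, and $Q$ fixes any vector that is constant on $C_1\cup C_2$, because the rows of $P$ sum to one. Hence the degree vector is unchanged, so $D_{G'}=D_G=QD_GQ$.

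As a sanity check on consistency, in this situation (P2) together with the equal-degree hypothesis forces the degrees in $S$ to balance, which is compatible with $|D_1|=|D_2|$. However, the matrix argument above does not need this.
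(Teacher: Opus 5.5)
Your proof is correct and follows the paper's argument essentially verbatim: (a) and (b) by linearity from $A_{G'}=QA_GQ$, $QJQ=J$, $QIQ=I$ plus orthogonal similarity, and (c) by observing that $Q$ commutes with $D_G$ (your block computation $P(kI)P=kI$) and that $A_{G'}\mathbf 1=QA_G\mathbf 1=Q(D_G\mathbf 1)=D_G\mathbf 1$. Your closing aside is harmless but can be sharpened: summing (P2) over $C_1$ and $C_2$ and using (P5) shows that equal degrees on $S$ actually force $|D_1|=|D_2|$, though, as you say, the matrix argument does not need this.
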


\begin{proof}
	For \textup{(a)}, using $A_{G'}=QA_GQ$, $QJQ=J$, and $QIQ=I$, we obtain
	\[
	\begin{aligned}
		Q M_{\alpha,\beta,\gamma,0}(G)Q
		&=
		Q(\alpha A_G+\beta J+\gamma I)Q
		=
		\alpha A_{G'}+\beta J+\gamma I=
		M_{\alpha,\beta,\gamma,0}(G').
	\end{aligned}
	\]
	Since $Q$ is orthogonal, the two matrices are similar and hence cospectral.
	
	For \textup{(b)}, the same computation gives
	\[
	\begin{aligned}
		Q M_{\alpha,\beta,\gamma,\delta}(G)Q
		=
		\alpha A_{G'}+\beta J+\gamma I+\delta D_{G'}=
		M_{\alpha,\beta,\gamma,\delta}(G').
	\end{aligned}
	\]
	Taking $(\alpha,\beta,\gamma,\delta)=(-1,0,0,1)$ gives $L_G=D_G-A_G$, while
	taking $(\alpha,\beta,\gamma,\delta)=(1,0,0,1)$ gives $L_G^+=D_G+A_G$.
	
	It remains to prove \textup{(c)}.  Suppose that all vertices in
	$C_1\cup C_2$ have the same degree in $G$.  Since $Q$ acts nontrivially
	only on the coordinates indexed by $C_1\cup C_2$, the degree matrix $D_G$
	commutes with $Q$, and hence $QD_GQ=D_G.$  Moreover,
	\[
	A_{G'}\mathbf 1
	=
	QA_GQ\mathbf 1
	=
	QA_G\mathbf 1
	=
	QD_G\mathbf 1
	=
	D_G\mathbf 1.
	\]
	Thus $G'$ has the same degree matrix as $G$, namely
	$D_{G'}=D_G=QD_GQ.$
	This proves \textup{(c)}.
\end{proof}

\section{Applications}\label{sec:applications}

We conclude with several consequences of the preceding non-isomorphism criteria from Theorem~\ref{thm:clique-wqh} and \ref{thm:WQH-weak-tensor}.

\begin{itemize}
    \item Let $G$ be a WQH-type graph satisfying \textup{(W1)--(W3)} of
Theorem~\ref{thm:WQH-weak-tensor}, and suppose that $|D_1|=|D_2|$.  If
$\mathcal H$ has two distinct vertices $i,j$ with $\lambda_{\mathcal H}^{+}(i,j)>0,$
then Theorem~\ref{thm:WQH-weak-tensor} gives a cospectral mate
$(\mathcal H\times G)'_i$ of $\mathcal H\times G$, and the two graphs are
non-isomorphic.  Hence this product graph is not determined by its adjacency
spectrum.  In particular, one may take $\mathcal H=P_3$, with $i,j$ the two
end vertices, or $\mathcal H=K_m$ with $m\ge3$.
    \item Theorem~\ref{thm:WQH-weak-tensor} also gives applications to coclique extensions.  Let
$\mathcal K_t^\circ$ denote the complete graph on $t$ vertices with a loop at
every vertex.  Then $\mathcal K_t^\circ\times G$ has adjacency matrix
$J_t\otimes A_G$, and hence is naturally isomorphic to the $t$-coclique
extension of $G$.  Since
\[
\lambda_{\mathcal K_t^\circ}^{+}(i,j)=t>0
\qquad (i\ne j),
\]
Theorem~\ref{thm:WQH-weak-tensor} shows that, for every $t\ge2$, the
$t$-coclique extension of such a graph $G$ has a non-isomorphic cospectral
mate.

    \item Theorem~\ref{thm:clique-wqh} gives another family of examples.  Since
Theorem~\ref{thm:clique-wqh} holds for $t\ge2$, it applies in particular to
$2$-clique extensions.  The clique-extension constructions in
\cite{GDK2026+} fit into this setting: the untwisted graph is a $2$-clique
extension of the corresponding base graph, and the elementary twists are
realized by WQH-switchings.  The resulting switched graphs are cospectral with
the untwisted graph, while the non-isomorphism is detected by the
common-neighbour data appearing in Theorem~\ref{thm:clique-wqh}.

\item Finally, one known WQH-switching construction has a non-isomorphism proof of
the same common-neighbour type.  In \cite[Theorem~13]{ABIAD20231}, the
generalized Johnson graph $J_{\{2\}}(n,4)$ is edge-regular, whereas after
WQH-switching one obtains an adjacent pair with more common neighbours than any
adjacent pair in the original graph.  Thus the non-isomorphism proof there is
based on the same type of common-neighbour obstruction as
Lemma~\ref{lem:external-test}, restricted to adjacent pairs.
 
\end{itemize}

\subsection*{Acknowledgments}

Aida Abiad is supported by NWO (Dutch Research Council) through the grant
VI.Vidi.213.085.  Hong-Jun Ge is supported by the CSC Scholarship Program
(No.~202506340038).

\bibliographystyle{abbrv}
\bibliography{AG}

@article{AHswitching,
  title={Cospectral graphs and regular orthogonal matrices of level 2},
  volume={19},
  issue={3},
  author={A. Abiad and W. H. Haemers},
  journal={Electron. J. Comb.},
  pages={\#P13},
  year={2012}
}

@article{GDK2026+,
  author  = {van Dam, E. R. and Ge, H.-J. and Koolen, J. H.},
  title   = {Co-edge-regular four-eigenvalue graphs with unbounded coherent rank},
  journal = {work in progress},
  year    = {2026}
  }

@article {abh2015,
    AUTHOR = {Abiad, Aida and Brouwer, Andries E. and Haemers, Willem H.},
     TITLE = {Godsil-{M}c{K}ay switching and isomorphism},
   JOURNAL = {Electron. J. Linear Algebra},
  FJOURNAL = {The Electronic Journal of Linear Algebra},
    VOLUME = {28},
      YEAR = {2015},
     PAGES = {4--11},
      ISSN = {1081-3810},
   MRCLASS = {05C50},
  MRNUMBER = {3386383},
       DOI = {10.13001/1081-3810.2986},
       URL = {https://doi.org/10.13001/1081-3810.2986},
}

@article {MRC,
AUTHOR = {Abiad, Aida and Brimkov, Boris and Breen, Jane and Cameron,     Thomas R. and Gupta, Himanshu and Villagr\'an, Ralihe R.},
     TITLE = {Constructions of cospectral graphs with different zero forcing
              numbers},
   JOURNAL = {Electron. J. Linear Algebra},
  FJOURNAL = {The Electronic Journal of Linear Algebra},
    VOLUME = {38},
      YEAR = {2022},
     PAGES = {280--294},
      ISSN = {1081-3810},
   MRCLASS = {05C50 (15A18)},
  MRNUMBER = {4426610},
MRREVIEWER = {Jephian\ C.-H.\ Lin},
}

@article{h2020,
    author = {Haemers, W. H.},
    title = {Cospectral pairs of regular graphs with different connectivity},
    journal = {Discuss. Math. Graph Theory},
    volume = {40},
    year = {2020},
    pages = {577--584}
}

@article{bch2015,
title = {Cospectral regular graphs with and without a perfect matching},
journal = {Discrete Math.},
volume = {338},
number = {3},
pages = {199-201},
year = {2015},
issn = {0012-365X},
doi = {https://doi.org/10.1016/j.disc.2014.11.002},
url = {https://www.sciencedirect.com/science/article/pii/S0012365X14004099},
author = {Zoltán L. Blázsik and Jay Cummings and Willem H. Haemers}
}

@article{EVANS2023113384,
title = {A general construction of strictly Neumaier graphs and a related switching},
journal = {Discrete Math.},
volume = {346},
number = {7},
pages = {113384},
year = {2023},
issn = {0012-365X},
doi = {https://doi.org/10.1016/j.disc.2023.113384},
url = {https://www.sciencedirect.com/science/article/pii/S0012365X23000705},
author = {Rhys J. Evans and Sergey Goryainov and Elena V. Konstantinova and Alexander D. Mednykh}
}

@article{IHRINGER2021112560,
title = {Graphs cospectral with ${N}{U}(n+1,q^2)$, $n\neq 3$},
journal = {Discrete Math.},
volume = {344},
number = {11},
pages = {112560},
year = {2021},
issn = {0012-365X},
doi = {https://doi.org/10.1016/j.disc.2021.112560},
url = {https://www.sciencedirect.com/science/article/pii/S0012365X21002739},
author = {Ferdinand Ihringer and Francesco Pavese and Valentino Smaldore}
}

@article{VANDAM2003241,
title = {Which graphs are determined by their spectrum?},
journal = {Linear Algebra Appl.},
volume = {373},
pages = {241-272},
year = {2003},
note = {Combinatorial Matrix Theory Conference (Pohang, 2002)},
issn = {0024-3795},
doi = {https://doi.org/10.1016/S0024-3795(03)00483-X},
url = {https://www.sciencedirect.com/science/article/pii/S002437950300483X},
author = {Edwin R. {van Dam} and Willem H. Haemers}
}

@article{ABScounting,
title = {Counting cospectral graphs obtained via switching},
journal = {Discrete Math.},
volume = {349},
number = {3},
pages = {114775},
year = {2026},
issn = {0012-365X},
doi = {https://doi.org/10.1016/j.disc.2025.114775},
url = {https://www.sciencedirect.com/science/article/pii/S0012365X25003838},
author = {Aida Abiad and Nils {van de Berg} and Robin Simoens}
}

@article{MAO2023,
title = {Constructing cospectral graphs via regular rational orthogonal matrices with level two},
journal = {Discrete Math.},
volume = {346},
number = {1},
pages = {113156},
year = {2023},
author = {L. Mao and W. Wang and F. Liu and L. Qiu}
}

@article{ABIAD20231,
title = {Cospectral mates for generalized {J}ohnson and {G}rassmann graphs},
journal = {Linear Algebra Appl.},
volume = {678},
pages = {1-15},
year = {2023},
issn = {0024-3795},
doi = {https://doi.org/10.1016/j.laa.2023.08.015},
url = {https://www.sciencedirect.com/science/article/pii/S0024379523003208},
author = {Aida Abiad and Jozefien D'haeseleer and Willem H. Haemers and Robin Simoens}
}

@article{h1996,
title = {Distance-regularity and the spectrum of graphs},
journal = {Linear Algebra Appl.},
volume = {236},
pages = {265-278},
year = {1996},
issn = {0024-3795},
doi = {https://doi.org/10.1016/0024-3795(94)00166-9},
url = {https://www.sciencedirect.com/science/article/pii/0024379594001669},
author = {Willem H. Haemers}
}

@article{abiad2024,
      title={Switching methods of level 2 for the construction of cospectral graphs}, 
      author={Aida Abiad and Nils van de Berg and Robin Simoens},
      year={2026},
      journal={Linear Algebra Appl.},
      volume={to appear}
}

@article {GM82,
    AUTHOR = {Godsil, C. D. and McKay, B. D.},
     TITLE = {Constructing cospectral graphs},
   JOURNAL = {Aequationes Math.},
  FJOURNAL = {Aequationes Mathematicae},
    VOLUME = {25},
      YEAR = {1982},
    NUMBER = {2-3},
     PAGES = {257--268},
      ISSN = {0001-9054,1420-8903},
   MRCLASS = {05C50},
  MRNUMBER = {730486},
MRREVIEWER = {M.\ Doob},
       DOI = {10.1007/BF02189621},
       URL = {https://doi.org/10.1007/BF02189621},
}

@article {IM19,
    AUTHOR = {Ihringer, Ferdinand and Munemasa, Akihiro},
     TITLE = {New strongly regular graphs from finite geometries via
              switching},
   JOURNAL = {Linear Algebra Appl.},
  FJOURNAL = {Linear Algebra and its Applications},
    VOLUME = {580},
      YEAR = {2019},
     PAGES = {464--474},
      ISSN = {0024-3795,1873-1856},
   MRCLASS = {51E20 (05C50 05E30)},
  MRNUMBER = {3982621},
MRREVIEWER = {Atif\ A.\ Abueida},
       DOI = {10.1016/j.laa.2019.07.014},
       URL = {https://doi.org/10.1016/j.laa.2019.07.014},
}

@article{ihringer2025,
      title={Design switching on graphs}, 
      author={Ferdinand Ihringer and Robin Simoens},
      year={2025},
      volume={2508.11523},
      journal={arXiv},
      primaryClass={math.CO},
      url={https://arxiv.org/abs/2508.11523}, 
}

@article{QIU2020265,
title = {On a theorem of Godsil and McKay concerning the construction of cospectral graphs},
journal = {Linear Algebra Appl.},
volume = {603},
pages = {265-274},
year = {2020},
author = {L. Qiu and Y. Ji and W. Wang}
}

@article {QJW20,
    AUTHOR = {Qiu, Lihong and Ji, Yizhe and Wang, Wei},
     TITLE = {On a theorem of {G}odsil and {M}c{K}ay concerning the
              construction of cospectral graphs},
   JOURNAL = {Linear Algebra Appl.},
  FJOURNAL = {Linear Algebra and its Applications},
    VOLUME = {603},
      YEAR = {2020},
     PAGES = {265--274},
      ISSN = {0024-3795,1873-1856},
   MRCLASS = {05C50},
  MRNUMBER = {4110279},
MRREVIEWER = {Jinsong\ Chen},
       DOI = {10.1016/j.laa.2020.05.025},
       URL = {https://doi.org/10.1016/j.laa.2020.05.025},
}

@article {WQH2019,
    AUTHOR = {Wang, Wei and Qiu, Lihong and Hu, Yulin},
     TITLE = {Cospectral graphs, {GM}-switching and regular rational
              orthogonal matrices of level {$p$}},
   JOURNAL = {Linear Algebra Appl.},
  FJOURNAL = {Linear Algebra and its Applications},
    VOLUME = {563},
      YEAR = {2019},
     PAGES = {154--177},
      ISSN = {0024-3795,1873-1856},
   MRCLASS = {05C50},
  MRNUMBER = {3872985},
MRREVIEWER = {Milica\ Andelic},
       DOI = {10.1016/j.laa.2018.10.027},
       URL = {https://doi.org/10.1016/j.laa.2018.10.027},
}

@article {ih2022,
    AUTHOR = {Ihringer, Ferdinand},
     TITLE = {Switching for small strongly regular graphs},
   JOURNAL = {Australas. J. Combin.},
  FJOURNAL = {The Australasian Journal of Combinatorics},
    VOLUME = {84},
      YEAR = {2022},
     PAGES = {28--48},
      ISSN = {1034-4942,2202-3518},
   MRCLASS = {05E30},
  MRNUMBER = {4468875},
MRREVIEWER = {Marija\ Maksimovi\'c},
}

\end{document}